  \newenvironment{proof*}[1]
 {%
   \begin{proof}}
\newenvironment{proofof}[1]{\par
  \pushQED{\qed}%
  \normalfont \topsep6\p@\@plus6\p@\relax
  \trivlist
  \item[\hskip\labelsep
        \itshape
    Proof of #1\@addpunct{.}]\ignorespaces
}{%
  \popQED\endtrivlist\@endpefalse
}
\def\({\left(}
\def\){\right)}
\newcommand{\defeq}{\coloneqq}
\newcommand{\eqdef}{\eqqcolon}
\newcommand{\R}{\mathbb R}   
\DeclareMathOperator{\sgn}{sgn}
\theoremstyle{plain}
\newtheorem{thm}{Theorem}
\newtheorem{prop}[thm]{Proposition}
\newtheorem{corr}[thm]{Corollary}
\newtheorem{lem}[thm]{Lemma}
\theoremstyle{definition}
\newtheorem{rem}[thm]{Remark}
\theoremstyle{definition}
\newtheorem{Def}[thm]{Definition}
\title{Geometry and topology of the Kerr photon region in the phase space}
\author[1]{Carla Cederbaum}
\author[1]{Sophia Jahns}
\affil[1]{Department of Mathematics, University of T\"ubingen\\ 

 Auf der Morgenstelle 10, 72076 T\"ubingen, Germany}
\date{}     
\begin{document}
\maketitle
\section{Abstract}

We study the set of trapped photons of a subcritical ($|a|<M$) Kerr spacetime as a subset of the phase space. First, we present an explicit proof that the photons of constant Boyer--Lindquist coordinate radius are the only photons in the Kerr exterior region that are trapped in the sense that 
they stay away both from the horizon and from spacelike infinity. 

We then proceed to identify the set of trapped photons as a subset of the (co-)tangent bundle of the subcritical Kerr 
spacetime.  We give a new proof showing 
that this set is a smooth $5$-dimensional submanifold of the (co-)tangent bundle with topology $SO(3)\times\mathbb R^2$ using results about the classification of $3$-manifolds.

Both results are covered by the rigorous analysis of Dyatlov~\cite{dyatlov}; however, the methods we use are very different and shed new light on the results and possible applications.

\section{Introduction}\label{intro0}

It is well-known that in the Schwarzschild spacetime with positive mass $M$, there is a family of photons that orbit the center of the spacetime at the fixed radius $r=3M$ (in Schwarzschild coordinates),
and the surface $\{r=3M\}$ which they fill is totally umbilical and called the \emph{photon sphere}. 
 
A similar phenomenon is known in the exterior of the subcritical Kerr spacetime: there is a $2$-parameter family of photons which stay at fixed coordinate radius~\cite{teo}. We give a thorough, explicit proof that these are the only trapped photons
in this spacetime family (Proposition~\ref{onlytrap}). This fact is often tacitly assumed to be well-known and indeed follows from Dyatlov's rigorous analysis~\cite{dyatlov}; whereas our proof explicitly demonstrates that the conventional computations (see e.g.~\cite{teo}) and heuristic arguments indeed do not overlook any trapped photons. 

Unlike in the Schwarzschild situation, the spacetime region in a subcritical Kerr spacetime where trapped photons are found (the \emph{region accessible to trapping} or the \emph{photon region}) is not a submanifold (with or without boundary) of the spacetime. 
Therefore, the spacetime itself seems not the right setting to investigate geometric properties of photon regions; instead, the (co-)tangent bundle, where the Kerr photon region will be seen to be a submanifold, 
is a much better setting for further investigation of its geometry. Of these two bundles, the cotangent bundle seems a more appropriate structure to understand photon regions,
since the natural symplectic form it carries allows to understand constants of motion as symmetries of the phase space (see e.g.~\cite{sommers}); 
and the analysis of the constants of motion plays an important role in understanding trapped light. 

For the Schwarzschild spacetime of positive mass, a direct computation shows that the set of trapped photons in the cotangent bundle is a submanifold of topology $SO(3)\times \R^2$.  The fact that the photon region in the Kerr phase space is a manifold of topology $ SO(3)\times \mathbb R^2$ as well was shown earlier as a consequence of more general results by Dyatlov in~\cite{dyatlov}, 
where the implicit function theorem was used on a family of slowly rotating Kerr spacetimes considering them as perturbations of the Schwarzschild spacetime. Our proof uses a different, more direct approach, which does not rely on knowledge about the Schwarzschild case and might help gain better insights into why the set of trapped photons in the Kerr phase space exhibits the properties in question. 
Knowing this alternative way to determine the topology might also be useful in possibly proving a uniqueness theorem for asymptotically flat, stationary, rotating, vacuum spacetimes with a photon region, in the spirit of e.g.~\cite{carla, carlagregpmt}, see below. 

Other than being interesting in its own right, understanding the geometry of photon regions is also relevant in a variety of contexts: light trapping phenomena play a role in the analysis of stability of black holes (see e.g. the lecture notes~\cite{dafermos}), 
as well as in gravitational lensing (see e.g.\,the review~\cite{Perlick2004}). 
Of current interest is also the role that photon regions play in the understanding of black hole shadows (see e.g.~\cite{grenze}). 

For static spacetimes, trapping of light has turned out to be a useful tool to prove uniqueness theorems; 
a static, asymptotically flat vacuum spacetime with a photon sphere as its inner boundary has to be a Schwarzschild spacetime \cite{carla, carlagregpmt};  analogous results holds for some  cases with matter
\cite{carlagregelectro, yaza, yaza2, yaza3}, and for one case even perturbative uniqueness was established~\cite{yoshino}, see also~\cite{shoom1, shoom2}.

Our article is organized as follows: In Section~\ref{intro}, we recall some well-known facts about the Kerr family and introduce some notation. 
In Section~\ref{sectiononlytrap}, we give a precise definition of trapping of light for stationary spacetimes and show that only photons of constant Boyer--Lindquist radius can be trapped in a subcritical Kerr spacetime. 
As a consequence of this, there are no null geodesically complete timelike umbilical hypersurfaces in the domain of outer communication in subcritical Kerr (Corollary~\ref{corUmbilical}).

The subsequent Section~\ref{inphase} explains how the set of trapped light can be understood as a subset of the (co-)tangent bundle; we prove that the photon region in the Kerr (co-)tangent 
bundle is a submanifold (Theorem~\ref{submanifold}). To this end, we make use of the characterization of trapped photons in terms of constants of motion and apply the implicit function theorem twice. 
Theorem~\ref{topo} shows that the photon region in the phase space of a subcritical Kerr spacetime has topology $SO(3)\times\mathbb R^2$. 
This is done by calculating its fundamental group via the Seifert--van Kampen theorem and using the classification of $3$-manifolds.

\section{Basic Facts and Notation}\label{intro}
We describe the Kerr spacetime of mass $M$ and angular momentum $a$ in Boyer--Lindquist coordinates $(t,r,\vartheta, \varphi)$ with $t\in\R$, radius $r>0$ suitably large, latitude $0\leq\vartheta\leq \pi$, and longitude $0\leq \varphi\leq 2\pi$. We use the following abbreviations: 
\begin{align*}
S&\defeq\sin\vartheta\\
C&\defeq\cos\vartheta\\
\rho^2&\defeq r^2+a^2\cos^2\vartheta\\
\Delta&\defeq r^2-2Mr+a^2\\
\mathscr A &\defeq\left(r^2+a^2\right)^2 -\Delta a^2\sin^2\vartheta.
\end{align*}

The metric of the Kerr spacetime in Boyer--Lindquist coordinates is given by
\begin{equation*}
-\left(1-\frac{2Mr}{\rho^2}\right)dt^2+\frac{\rho^2}{\Delta}dr^2+ \rho^2 d\vartheta^2 -\frac{4MraS^2}{\rho^2}d t d\varphi+\frac{\mathscr A}{\rho^2}S^2 d\varphi^2.
 \end{equation*}
 The \emph{domain of outer communication} (DOC) is the spacetime patch where $r>M+\sqrt{M^2-a^2}$.
The Boyer--Lindquist coordinates cover the entire DOC, except the \emph{axis} $\{S=0\}$. The breakdown of the metric in this form is a mere coordinate artefact; it can be extended smoothly over the axis, 
and so can the Killing vector fields $\partial_t$ and $\partial_\varphi$ (see \cite{oneill}).
We treat the \emph{subcritical} case $|a|<M$. In this case, the \emph{horizon} $\{\Delta=0\}$ has two connected components, and its outer component $\{r=M+\sqrt{M^2-a^2}\}$ is the boundary of the DOC.

Furthermore, we assume that $a\geq 0$. This is no restriction of generality, since a sign change of $a$ just means a change of the direction of rotation. 
If $a=0$, we recover the Schwarzschild metric. 

The Kerr spacetime will be denoted by $\left(K,g\right)$. \\

It is well-known that the motion of regular (that is, not entirely contained in the axis) photons in the DOC of Kerr is governed by the following equations of motion (see e.g.\;\cite{oneill}):
\begin{align}
\Delta\rho^2\dot t&=\mathscr A E-2MraL,\\
\rho^4\dot{r}^2&=E^2r^4+(a^2E^2-L^2-\mathfrak Q)r^2+2M((aE-L)^2+\mathfrak Q)r-a^2\mathfrak Q\eqdef R(r), \label{R}\\
\rho^4\dot{\vartheta}^2&=\mathfrak Q-\left(\frac{L^2}{S^2}-E^2a^2\right)C^2\eqdef{\Theta(\vartheta)},\label{Theta}\\
\Delta\rho^2\dot \varphi&=2MraE-\left(\rho^2-2Mr\right)\frac{L}{S^2}.
\end{align}
The dot denotes the derivative with respect to the affine parameter.

The quantity $E=-\langle \dot\gamma, \partial_t\rangle$ is the \emph{energy} of a geodesic $\gamma$, $L=\langle \dot\gamma, \partial_\varphi\rangle$ its \emph{angular momentum}, and $\mathfrak Q$ its 
 \emph{Carter constant}, given as $T\left(\dot\gamma, \dot \gamma\right)$, where $T$ is the Killing tensor given ba $T^{\mu \nu} =\frac{1}{r^2}g^{\mu\nu}+ 2 \rho^2 l^{( \mu}n^{\nu )}$ 
 (where $l^\mu=\frac{1}{\Delta}\left(r^2+a^2, 1, 0, a\right)$ and $n^\nu=\frac{1}{2\rho^2}\left(r^2+a^2, -\Delta, 0,a\right)$ are the \emph{principal null directions}).
 
 For dealing with general geodesics, it is useful to further introduce the quantity $\mathrm q\defeq g(\dot\gamma, \dot\gamma)$, which equals zero for null geodesics (photons). 
 
 These so-called \emph{constants of motion} $\mathrm q, E, L, \mathfrak Q$ are constant along each geodesic, since they are derived from Killing vectors and Killing tensors, respectively. 
 Note that they extend over the axis, since the generating tensors do so (see \cite{oneill}).

Wherever $\partial_t$ is timelike, all photons have positive energy $E>0$.
In case $E\neq 0$, we can rewrite the so-called \emph{$R$-equation} \eqref{R} and \emph{$\Theta$-equation} \eqref{Theta} as the \emph{scaled} \emph{$R$-equation}
and the \emph{scaled} \emph{$\Theta$-equation}, respectively:
\begin{align}
& \left(\frac{\rho^2}{E}\right)^2\dot{r}^2=r^4+(a^2-\Phi^2-Q)r^2+2M((a-\Phi)^2+Q)r-a^2Q\left(=\tfrac R {E^2}\right), \label{Rscale}\\
 &\left(\frac{\rho^2}{E}\right)^2\dot{C}^2=Q-(Q+\Phi^2-a^2)C^2-a^2C^4 \left(= \tfrac \Theta {E^2}\right),\label{Thetascale}
\end{align}

with the \emph{conserved quotients} $\Phi\defeq \frac{L}{E}$, and  $Q\defeq \frac{\mathfrak Q}{E^2}$ (see \cite{teo}).

In the Schwarzschild case $a=0$, the photons in the DOC with constant $r$-coordinate are precisely those that fulfill $r(s)=3M$ and $\dot r(s)=0$ for all parameter values $s\in\R$. 
The set $\{r=3M\}$ in Schwarzschild is called the \emph{photon sphere}. 

It was shown in \cite{teo} that in the DOC of a subcritical Kerr spacetime with $a\neq 0$ all photons with constant $r$-coordinate (\emph{spherical photon orbits}) belong to the one-parameter class of solutions of 
$R(r)=\frac{\partial R(r)}{\partial r}=0$ given by

\begin{align}
 \Phi_{\text{trap}}(r)&\defeq-\frac{r^3-3Mr^2+a^2r+a^2M}{a(r-M)},\label{teo} \\ 
Q_{\text{trap}}(r)&\defeq-\frac{r^3(r^3-6Mr^2+9M^2r-4a^2M)}{a^2(r-M)^2}\label{teoQ}
\end{align}
(see~\eqref{R} and~\eqref{Rscale}), where $r$ ranges from \begin{equation*} \hat{r}_1\defeq 2M(1+\cos(2/3 \arccos(-a/M)))\end{equation*} to \begin{equation*}\hat{r}_2\defeq 2M(1+\cos(2/3 \arccos(a/M))).\end{equation*}

The \emph{corotating} (\emph{counterrotating}) orbits (i.e., those with positive (negative) angular momentum $L$) are located inside (outside) the hypersurface $\{r=r_m\}$, where 

\begin{equation}\label{rm}
r_m\defeq M+2\sqrt{M^2-\frac 1 3 a^2}\cos\left(\frac 1 3 \arccos \frac{M(M^2-a^2)}{(M^2-\frac 1 3 a^2)^{\frac 3 2 }}\right),
\end{equation}
while those at the intermediate radius $r_m$ have vanishing angular momentum $L$. 

The function $\Phi_{\text{trap}}: \left[\hat r_1, \hat r_2 \right]\rightarrow \mathbb R$ is strictly monotonically decreasing and has only one zero, located at $r_m$, 
while $Q_{\text{trap}}: \left[\hat r_1, \hat r_2 \right]\rightarrow \mathbb R_{\geq 0}$ has its zeros at $\hat r_1$ and $\hat r_2$, is strictly increasing on $\left[\hat r_1, 3M\right]$, 
and strictly decreasing on $\left[3M, \hat r_2\right]$. The quantity $\Phi_{\text{trap}}$ determines the maximal latitude for trapped photons. 
In particular, trapped photons with radius $r$ can only reach the the axis $\{S^2=0\}$ if  $\Phi_{\text{trap}}(r)=0$. 

For these facts and more information about \emph{spherical photon orbits} (i.e., those of constant radius) in the Kerr spacetime, see \cite{teo}.

 \section{Trapped Photons in the Kerr spacetime}\label{sectiononlytrap}

 It is natural to ask whether there are more photons than those of constant radial coordinate which are ``trapped'' around the Kerr center, without ever falling through the horizon or escaping to spatial infinity. 
 Before answering this question, we first need a more precise notion of what it means to be trapped.
 
 We suggest the following definition for the rather general case of stationary spacetimes: 
\begin{Def}
A photon in the DOC of a stationary spacetime is called \emph{trapped} if its orbit in the quotient of the DOC under the action of the stationary Killing vector field $\partial_t$ is contained in a compact set. 
\end{Def}

Thus, in the Kerr spacetime, a photon is trapped if and only if the range of its radial coordinate is a relatively compact set contained in $\left(M+\sqrt{M^2-a^2}, \infty\right)$. 

\begin{prop}\label{onlytrap}
 The spherical photons are the only trapped photons in the DOC of a subcritical ($0<a<M$) Kerr spacetime.
\end{prop}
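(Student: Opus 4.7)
The plan is to reduce the analysis of the quartic radial polynomial $R$ to a one-dimensional potential-well argument on the domain of outer communication, using the classical factorization
\[R(r) = \bigl[(r^2+a^2)E - aL\bigr]^2 - \Delta(r)\bigl[(aE-L)^2 + \mathfrak Q\bigr],\]
which is immediate by comparing coefficients in \eqref{R}. Setting $P(r) \defeq (r^2+a^2)E - aL$, $K \defeq (aE-L)^2 + \mathfrak Q$, and $F(r) \defeq P(r)^2/\Delta(r)$ on $(r_h, \infty)$ with $r_h \defeq M+\sqrt{M^2-a^2}$, the condition $R(r) \geq 0$ in the DOC becomes simply $F(r) \geq K$, and roots of $R$ in the DOC correspond exactly to solutions of $F(r) = K$.

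I would then study $F$. A short computation gives $F'(r) = P(r)\bigl[2P'(r)\Delta(r) - P(r)\Delta'(r)\bigr]/\Delta(r)^2$, so $F'=0$ happens exactly when $P(r) = 0$ or when the bracket vanishes; after expansion, the bracket condition simplifies precisely to the Teo relation $L/E = \Phi_{\text{trap}}(r)$ from \eqref{teo}. Using strict monotonicity of $\Phi_{\text{trap}}$ on $(M,\infty)$ together with the boundary value $\Phi_{\text{trap}}(r_h) = 2Mr_h/a$ (a short calculation using $r_h^2+a^2 = 2Mr_h$), which is also precisely the threshold $L/E$ at which the zero of $P$ first enters the DOC, exactly one of the two candidate critical points lies in $(r_h,\infty)$ for each admissible value of $L/E$. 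Combined with $F(r_h^+) = F(\infty) = +\infty$ in the generic case $P(r_h) \neq 0$, this forces $F$ to have a unique global minimum on the DOC attained at some $r_*$, with value $F_{\min} \defeq F(r_*)$.

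A three-case distinction on $K - F_{\min}$ then finishes the proof. If $K < F_{\min}$ then $R > 0$ throughout the DOC, so $|\dot r|$ is bounded below on compacta and $r(s)$ is strictly monotonic, reaching either the horizon or spatial infinity in finite affine parameter; the orbit is not trapped. If $K > F_{\min}$ then $\{R \geq 0\} \cap (r_h,\infty)$ splits into the disjoint components $(r_h, r_-]$ and $[r_+, \infty)$ with $r_- < r_* < r_+$, and the same monotonicity argument applied inside either component forces $r$ to reach the boundary of the DOC; still not trapped. Finally, if $K = F_{\min}$ then $R \geq 0$ everywhere with a double zero at $r_*$; either $r(s) \equiv r_*$, which is a spherical photon, or linearization of $\dot r^2 = R(r)/\rho^4$ near the double root yields $r(s) - r_* \sim e^{\pm \alpha s}$, so such an orbit diverges to the boundary of the DOC in one affine-parameter direction, again not trapped. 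Hence the only trapped photons are the spherical ones.

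The technical heart is the critical-point step: identifying the bracket condition with the Teo relation and using monotonicity of $\Phi_{\text{trap}}$ together with the boundary value $\Phi_{\text{trap}}(r_h) = 2Mr_h/a$ to pin down a unique interior minimum of $F$. The degenerate boundary case $P(r_h) = 0$ is a routine variant that can be handled by the same identity and boundary analysis.
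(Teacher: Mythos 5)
Your route is genuinely different from the paper's and its core is sound. The paper works directly with the quartic $R$: it first uses the scaled $\Theta$-equation to force $Q>0$ and a negative $r^2$-coefficient for any hypothetical non-spherical trapped photon, and then destroys the required root configuration by the single evaluation $R(M)/E^2\geq 0$. You instead exploit the factorization $R=P^2-\Delta K$ to recast $R\geq 0$ on the DOC as a potential-well condition $F=P^2/\Delta\geq K$ and show that $F$ has a unique interior critical point, necessarily its global minimum. I checked the key algebra: the factorization is correct, the bracket condition $2P'\Delta=P\Delta'$ does reduce to $L/E=\Phi_{\text{trap}}(r)$, one has $\Phi_{\text{trap}}'(r)=-2\left[(r-M)^3+M(M^2-a^2)\right]/\left(a(r-M)^2\right)<0$ on all of $(M,\infty)$ (slightly more than the monotonicity stated in Section~\ref{intro}, which is only on $[\hat r_1,\hat r_2]$, so you should record this computation), and $\Phi_{\text{trap}}(r_h)=2Mr_h/a$ is indeed the threshold at which the zero of $P$ enters the DOC. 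Your argument is longer at the critical-point step but buys more structure: it locates the full turning-point picture and explains why a double root of $R$ in the DOC can only sit on the Teo branch, rather than merely excluding a root configuration.

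There are, however, two genuine omissions. First, the subcase $E=0$ (which occurs in the ergoregion, so it cannot be excluded a priori) breaks your stated asymptotics: with $E=0$ one has $P=-aL$ constant and $F=a^2L^2/\Delta\to 0$ as $r\to\infty$, not $+\infty$, so the well picture fails; moreover your reduction of the bracket condition to $L/E=\Phi_{\text{trap}}(r)$ presupposes $E\neq 0$. This case must be treated separately (it is short: the paper notes that the degenerate quadratic $R$ then has at least one root at or inside the outer horizon). Second, you never address the axis $\{S^2=0\}$, where the Boyer--Lindquist form of the equations of motion degenerates: photons contained entirely in the axis need a separate argument, and photons merely crossing the axis need the remark that the constants of motion and the $R$-equation extend over it. Neither gap is fatal, but both must be closed for a complete proof. (Your step from ``$r$ monotone with $R>0$ on the relevant component'' to ``not trapped'' is at the same level of rigor as the paper's own ``turns around or asymptotically approaches'' reduction, so I do not count that against you.)
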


\begin{proof}
Let $\gamma=(t,r,\vartheta, \varphi)$ be a trapped photon of non-constant radial coordinate in the DOC of a subcritical Kerr spacetime. 
We will first treat the \textbf{case that $\gamma$ does not intersect the axis $\{S^2=0\}$}. 

We focus on the \textbf{subcase that the photon has nonvanishing energy $E\neq 0$} and investigate the right-hand side $R(r)$ of Equation \eqref{R}. 
Since its left-hand side is manifestly non-negative, $\gamma$ has to lie in a region
of the Kerr space-time where $R(r)$ is non-negative. 

Since the range of the trapped photon's radial coordinate is a relatively compact set contained in $\left(M+\sqrt{M^2-a^2}, \infty\right)$,
there must be two values $M+\sqrt{M^2-a^2}<r_1 < r_2<\infty$ such that the photon either turns around when reaching $r_i$ (that is, the sign of $\dot r$ changes at some $s_i$ with $r(s_i)=r_i$) or asymptotically approaches $r_i$ 
(that is, $\lim\limits_{s\to +\infty}r(s)=r_i$ or $\lim\limits_{s\to-\infty}r(s)=r_i$). 

As a consequence, for trapping with non-constant radial component, we need two zeros $r_i$ ($i=1,2$) of $R(r)$ that lie in the interval $\left(M+\sqrt{M^2-a^2}, \infty\right)$, with $R(r)\geq 0$ between these zeros. 

Thus, on the search for trapped photons of non-constant coordinate radius, analyzing the polynomial $R(r)$ of degree $4$ with leading coefficient $E^2>0$, one would have to find constants of motion such that all roots of $R$ are real, and 
(writing $r_0\leq r_1\leq r_2\leq r_4$ for these roots) such that
 $r_1,r_2, r_3$ are outside the outer horizon component $\{r=M+\sqrt{M^2-a^2}\}$, and $r_1$ and $r_2$ do not coincide, see Figure~\ref{Rplot}.

\begin{figure}[h]
\centering
\scalebox{.5}{\includegraphics{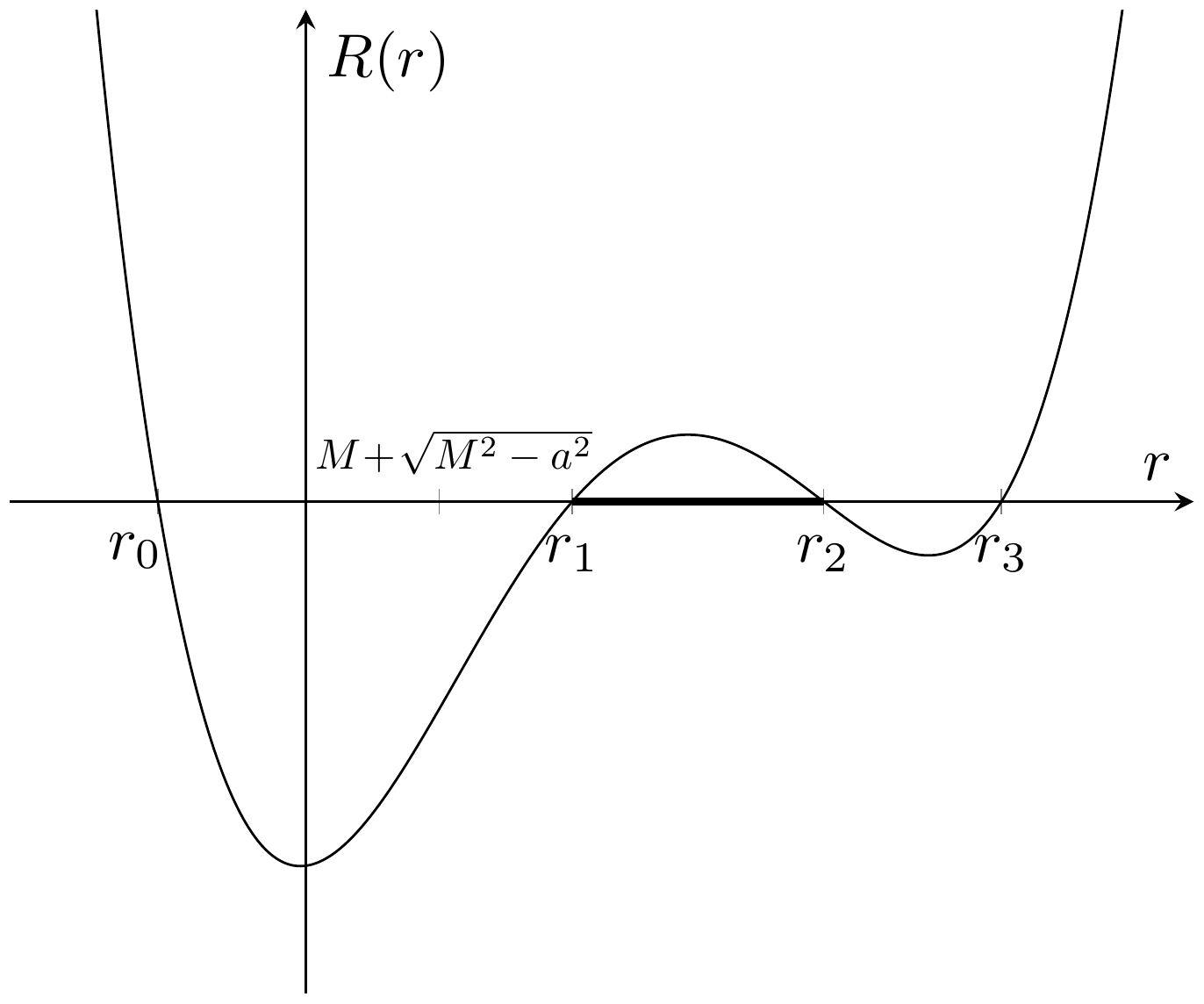}}
\caption{A qualitative plot of $R$ against $r$ for a hypothetical trapped photon of non-constant coordinate radius.}
\label{Rplot}
\end{figure}

Since we assumed that $E\neq 0$, we may work with the scaled $R$-equation \eqref{Rscale}. 
We see immediately that the coefficient of $r^2$ has to be negative to allow for the constellation of roots we need (since otherwise $\frac{d}{dr}\left(\frac{R}{E^2}\right)$ would be monotonous and $\frac{R}{E^2}$ convex), that is,
 \begin{equation}\label{aphiq}
           a^2-\Phi^2-Q<0.
  \end{equation}
 
 On the other hand, since the left-hand side of the scaled $\Theta$-equation is manifestly non-negative, choosing a non-positive $Q\leq0$ forces $a^2-\Phi^2-Q\geq0$, contradicting 
 \eqref{aphiq}.
 We have thus ruled out the case $Q\leq0$ 
 for trapped light and may focus on the case $Q> 0$. 
 
Since $Q>0$ implies $\frac{R(0)}{E^2}<0$, a necessary condition for the second root $r_1$ of $\frac{R(r)}{E^2}$ to be outside the outer horizon component is $\frac{R(M)}{E^2}<0$.
But on the other hand, we can estimate (using $Q>0$ and $a<M$):
\begin{equation*} 
 \frac{R(M)}{E^2}=M^2((\Phi-2a)^2+M^2-a^2)+M^2Q-a^2Q\geq M^2(\Phi-2a)^2\geq 0,
\end{equation*}
thus ruling out the existence of trapped light with $r\neq const.$ outside the axis with nonvanishing energy $E$.

In the \textbf{subcase of vanishing energy $E=0$}, the $R$-equation reduces to 
\begin{equation*}
\rho^4\dot{r}^2=-(L^2+\mathfrak Q)r^2+2M(L^2+\mathfrak Q)r-a^2\mathfrak Q.
\end{equation*}
The roots of the right-hand side of this equation are given by
\begin{equation*}
r_{1,2}=M\pm\sqrt{M^2-\frac{a^2\mathfrak Q}{L^2+\mathfrak Q}},
\end{equation*}
and at least one of them (if real) is smaller than the radius of the outer horizon component, $M+\sqrt{M^2-a^2}$.

Summing up, so far we have shown that there are no trapped photons contained in $K\setminus \{S^2=0\}$ with nonconstant radial component. 

We need to treat the \textbf{case of photons that are contained in the axis $\{S^2=0\}$} separately. The axis is a $2$-dimensional, totally geodesic submanifold with line element 
\begin{equation*}
-(1-2Mr/\rho^2)dt^2+(\rho^2/\Delta) dr^2,
\end{equation*}
so every photon contained in the axis has to fulfill
\begin{equation*}
-(1-2Mr/\rho^2)\dot{t}^2+(\rho^2/\Delta) \dot{r}^2=0. 
\end{equation*}
For trapped photons, $\dot{r}$ has to vanish at some parameter value (at least asymptotically), while $\dot t\neq 0$ cannot approach $0$. This gives $(1-2Mr/\rho^2)=0$, that is, $\Delta=0$, which means that the photon is not in the DOC. 

The \textbf{case of photons that cross the axis but are not entirely contained in it} can be treated like the off-axis case: the $R$-equation is also valid on the axis, 
and the conditions on the conserved quotients that were derived from the scaled $\Theta$-equation still would have to be fulfilled, since the conserved quotients can be calculated off-axis. 
\end{proof}
 The statement of Proposition~\ref{onlytrap} was also discussed in \cite{claudio}, using different techniques, and follows from~\cite{dyatlov}. 

 We can use Proposition~\ref{onlytrap} to show nonexistence of certain umbilical hypersurfaces in the DOC. We remind the reader that a semi-Riemannian manifold is called \emph{null geodesically complete} if 
 every inextendible null geodesic is defined on all of $\mathbb R$. 

\begin{corr}\label{corUmbilical} There are no null geodesically complete, timelike umbilical hypersurfaces in the DOC of the subcritical Kerr spacetime (with $a\neq 0$) 
whose quotient under the stationary Killing vector field $\partial_t$ is contained in a compact set. In particular, none of the cylinders $\{r=const.\}$ are umbilical.
\end{corr}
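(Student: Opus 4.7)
The plan is a proof by contradiction. Suppose $H\subset\mathrm{DOC}$ is a null geodesically complete, timelike, totally umbilical hypersurface whose image in the quotient $\mathrm{DOC}/\langle\partial_t\rangle$ is relatively compact. Since $H$ is totally umbilical, its second fundamental form satisfies $\mathrm{II}^H=\lambda\,g|_H$ for some function $\lambda$ on $H$, so $\mathrm{II}^H(X,X)=0$ for every null $X\in T_pH$. Combining this with the Gauss formula and the uniqueness of geodesics, the maximal null geodesic $\gamma$ of $(K,g)$ with $\dot\gamma(0)=X$ remains inside $H$. By null geodesic completeness, $\gamma$ is defined on all of $\R$; and since its image lies in $H$, its projection to $\mathrm{DOC}/\langle\partial_t\rangle$ is contained in a compact set. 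Hence $\gamma$ is a trapped photon in the sense defined above, and Proposition~\ref{onlytrap} forces it to be a spherical photon orbit. In particular $\dot r\equiv 0$ along $\gamma$, so $dr(X)=0$.

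Varying $p\in H$ and $X$ over the null vectors of $T_pH$, I deduce that $dr$ annihilates every null vector of the Lorentzian $3$-space $(T_pH, g|_{T_pH})$. Since $H$ is timelike, this $3$-space has signature $(-,+,+)$, and a short orthonormal-basis calculation shows that its null cone is not contained in any proper subspace of $T_pH$ (three linearly independent null vectors are exhibited by $e_0\pm e_1$ and $e_0+e_2$). Consequently $dr$ vanishes identically on $T_pH$, which forces $T_pH=\ker\!\left(dr|_{T_pK}\right)=T_p\{r=r(p)\}$. Hence each connected component of $H$ is an open subset of a single cylinder $\{r=r_0\}$.

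The contradiction, which also establishes the `in particular' statement on its own, is then obtained by a direct computation showing that no cylinder $\{r=r_0\}$ in the subcritical ($a\neq 0$) DOC is totally umbilical. Using the spacelike unit normal $n=\sqrt{\Delta/\rho^2}\,\partial_r$, one evaluates the shape operator on the coordinate fields $\partial_t,\partial_\vartheta,\partial_\varphi$; the resulting principal ratios, for instance $\mathrm{II}^H(\partial_\vartheta,\partial_\vartheta)/g(\partial_\vartheta,\partial_\vartheta)$, carry a genuine $\vartheta$-dependence through $\rho^2$ and $\mathscr A$ that is incompatible with $\mathrm{II}^H=\lambda\,g|_H$ for any scalar $\lambda$. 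The main obstacle I anticipate is the pointwise linear-algebra reduction of paragraph two: once one confirms that the null cone of a Lorentzian $3$-space spans the whole space, the remainder is either the standard fact that umbilicality forces null tangent geodesics to stay inside $H$ or the routine explicit computation of the shape operator of a cylinder.
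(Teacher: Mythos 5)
Your proposal is correct, and its first half---umbilicality plus null geodesic completeness plus the relatively compact quotient force every null geodesic initially tangent to $H$ to be trapped, hence spherical by Proposition~\ref{onlytrap}---is exactly the paper's opening move (the paper cites Perlick's Proposition~3 for the tangency statement rather than rederiving it from the Gauss formula). Where you genuinely diverge is in how the contradiction is extracted. The paper stays pointwise and counts degrees of freedom: at an off-axis point a trapped photon has its conserved quotients pinned to $\Phi_{\text{trap}}(r)$ and $Q_{\text{trap}}(r)$, leaving only the energy scaling and a sign of $\dot\vartheta$ free, so the trapped null directions at $p$ form at most a one-parameter family, while the null cone of the timelike $3$-plane $T_p\mathscr S$ is two-dimensional---contradiction, with no curvature computation and no identification of $\mathscr S$. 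You instead note that $dr$ annihilates every null vector of $T_pH$, that the null cone of a Lorentzian $3$-space spans it (your three vectors $e_0\pm e_1$, $e_0+e_2$ are indeed independent), and hence that $H$ is an open piece of a cylinder $\{r=r_0\}$; you then still owe the reader the shape-operator computation. That computation does go through and is short if you pick the right component: with $\lambda=r/\rho^2$ forced by the $\vartheta\vartheta$-component, the $t\varphi$-component of the umbilicality condition $\partial_r g_{\mu\nu}=2\lambda g_{\mu\nu}$ reduces to $a^2\cos^2\vartheta=3r^2$, which fails everywhere in the DOC for $a\neq 0$; you should write this out rather than assert a ``genuine $\vartheta$-dependence.'' Your longer route has two compensating merits: it proves the ``in particular'' clause for every radius $r_0$ in the DOC directly, whereas the trapping argument only sees radii in $[\hat r_1,\hat r_2]$ and, as stated, requires one to know that the cylinders are null geodesically complete; and it localizes cleanly the role of $a\neq 0$ (for $a=0$ your argument still identifies $H$ as a cylinder, but the principal ratios all equal $2/(3M)$ at $r=3M$, consistent with the photon sphere counterexample noted in the paper's footnote).
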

\begin{proof}
Assume towards a contradiction that there is such a hypersurface $\mathscr S$. 
By Proposition 3 in \cite{perlick}, a timelike hypersurface in a Lorentzian manifold is umbilical if and only if every null geodesic that is initially tangent to it remains tangent throughout. 
Since by the assumption the quotient of $\mathscr S$ after factoring out $\partial_t$ is relatively compact in the quotient of the DOC, 
every photon that stays on $\mathscr S$ has the property that the range of its radial coordinate is a relatively compact set contained in $\left(M+\sqrt{M^2-a^2}, \infty\right)$. 
These two facts combine to the conclusion that every photon that is initially tangent to $\mathscr S$ is trapped. 

By Proposition~\ref{onlytrap}, the only trapped photons are those of constant radial coordinate, 
hence for any given radius where trapped photons exist, there is only one choice for their conserved quotients $\Phi$ and $Q$ 
(namely, the values for $\Phi_\text{trap}$ and $Q_\text{trap}$ from Equations \eqref{teo} and \eqref{teoQ}). 
We are free to choose a positive value of $E$ for a trapped photon; that is, there is one degree of freedom for choosing the constants of motion for a trapped photon at a given point. 
These constants of motion determine the off-axis trapped photons completely, possible up to a sign choice for $\dot\vartheta$. 
That is, at every point in the Kerr DOC, there is (at most) one degree of freedom for choosing an initial direction for a trapped photon.

On the other hand, at every point of $\mathscr S$, the cone of lightrays tangent to $\mathscr S$ is 2-dimensional, 
which means that there are two degrees of freedom for choosing trapped photons at a given point of $\mathscr S$. This gives a contradiction, proving that there is no such $\mathscr S$. 
\footnote{In Schwarzschild, the statement is no longer true, as is demonstrated by the striking counterexample of the photon sphere $\{r=3M\}$. 
The proof of Corollary~\ref{corUmbilical} cannot be immitated in the $a=0$ case since there we do not have Equations~\ref{teo} and~\ref{teoQ} at our disposition.}
\end{proof}

\section{The Kerr Photon Region as a Submanifold of the (Co-)tangent Bundle}\label{inphase}

We call the region in the Kerr spacetime where one can find trapped photons the \emph{region accessible to trapping}. 
In contrast to the Schwarzschild case, in the subcritical Kerr spacetime the region accessible to trapping is not a submanifold (with our without boundary) of the spacetime; 
it can be imagined as the region covered by a crescent moon that rotates about an axis through its pointy ends, see Figure~\ref{crescent}. 
\begin{figure}[h]
\centering
\scalebox{.5}{\includegraphics{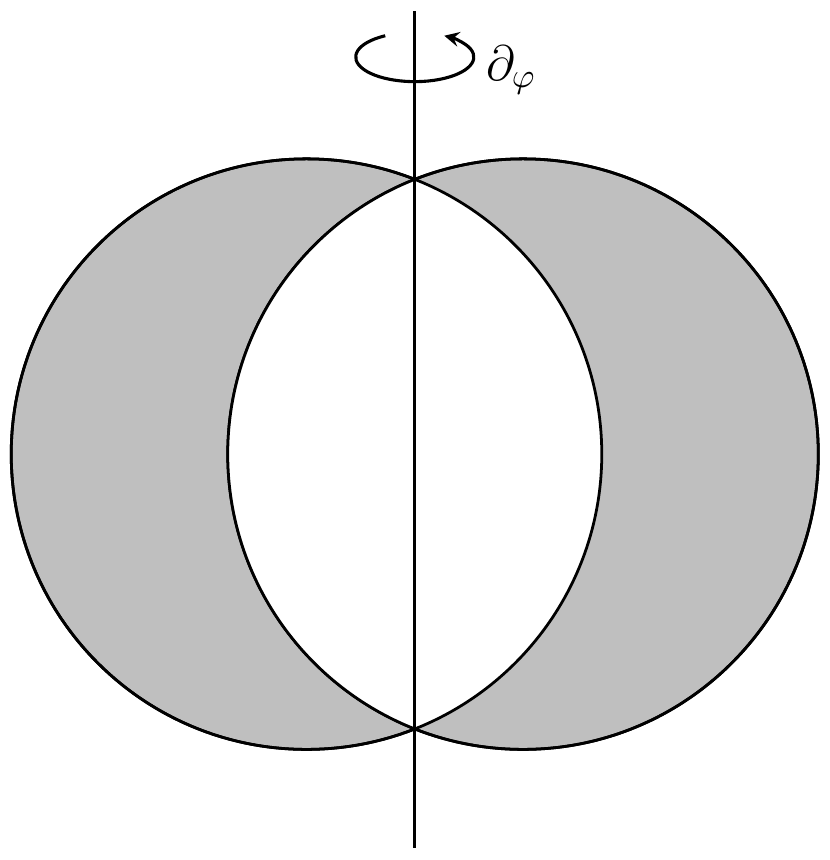}}
\caption{A $t=const.$ slice of the photon region in the Kerr spacetime.}
\label{crescent}
\end{figure}

We will identify the set of trapped photons in any spacetime with a subset of the \mbox{(co-)}tangent bundle. 
In the subcritical Kerr spacetime, it turns out that this subset of the (co-)tangent bundle is a much nicer geometrical object than the region accessible to trapping.

One can identify geodesics in any pseudo-Riemannian manifold $\left(M, g\right)$ with points in the tangent bundle $TM$  in the following way: 
The natural map 
\begin{equation*}          
\{\text{geodesics in }M\}\rightarrow TM
\end{equation*}

given by 
\begin{equation*}
 \gamma \mapsto \left(\gamma(0), \dot{\gamma}(0)\right)
\end{equation*}
is a bijection by uniqueness of geodesics. The canonical isomorphism $TM\rightarrow T^*M$ now gives a similar identification of geodesics with points in $T^*M$.

We only consider future-directed trapped photons; by reversing the direction of time (using the symmetry $t\mapsto -t$) one can extend all statements to past directed photons. 

The task in the present section is to show that the \emph{photon region in the tangent bundle} of the subcritical Kerr spacetime $P'\subseteq TK$ 
and the \emph{photon region in the cotangent bundle} $P\subseteq T^*K$ (that is, the image of $P'$ under the canonical isomorphism $TM\rightarrow T^*M$) are smooth submanifolds of $TK$ and $T^*K$, respectively. 
Of course, statements about the submanifold structure and the topology of $P'$ and $P$ imply each other immediately, but it will turn out to be useful to work in both settings.

\begin{rem} If $\left(M, g\right)$ is any spacetime, the above identification allows us to view the set of photons in $M$ as \begin{equation*}\{(m,p)\in TM: g_m(p,p)=0\}.\end{equation*} As the preimage of $0$ under $(m,p)\mapsto g_m(p,p)$, this set is a submanifold of $TM$ (and as regular as the spacetime itself), 
since the differential of this map contains the components of $2g_m(\cdot, p)$ as its last $n$ matrix components, and this never vanishes by non-degeneracy of $g$. 
\end{rem}

We can naturally extend all functions that were defined on the DOC of Kerr to the tangent bundle 
(of the DOC) of the Kerr spacetime, so that now in particular the constants of motion and the conserved quotients, but also  $\Phi_{\text{trap}}$ and $Q_{\text{trap}}$ are functions on $TK$.
Slightly abusing notation, we will use the same letters to denote these new functions. This remark applies---mutatis mutandis---also to $T^*K$.

The region accessible to trapping intersects the axis in only two points; justified by the equatorial symmetry $\vartheta\mapsto -\vartheta$, 
we will call one of them the North Pole and the other one the South Pole, and we will refer to the two connected components of the region accessible to trapping without the equatorial plane $\{S^2=1\}$ as the Northern and the Southern Hemisphere.

\begin{rem} As it turns out, in order to obtain later the topology of $P'$ and $P$ we will need to construct smooth bundle charts for a spatial slice of the image of $P$ under the canonical isomorphism $TK\rightarrow T^*K$. 
Nonetheless, this only proves that $P$ is a manifold, not that it is a submanifold of $T*K$. 
Even if one is only interested in the manifold structure of $P'$ and $P$, not their submanifold structure, 
the proof of the submanifold property has advantages over the explicit construction of charts, since the latter will turn out to be quite technical.  
\end{rem}

On $TK =K\times \mathbb R^4$, 
we use coordinates $(t,r,\vartheta, \varphi , p^0,p^1, p^2, p^3)$, where $( p^0,p^1, p^2, p^3)$ are the components of the contravariant tangent vector in the same coordinate basis. 

\begin{rem}
In the Schwarzschild case $a=0$, it is easy to see that the photon sphere lifts to a submanifold of the tangent bundle of Schwarzschild: this photon region consists precisely of all photons of the form
$(t,3M,\vartheta, \varphi, p^0, 0, p^2, p^3)$ and is thus in particular a submanifold of the space of all photons.  
\end{rem}

We shall now consider the situation in the subcritical Kerr spacetime with $a\neq 0$.

\begin{prop} \label{off-axis}
The Kerr photon region in the (co-)tangent bundle $TK$ ($T^*K$) of a subcritical Kerr spacetime is a smooth submanifold when the axis $\{S^2=0\}$ is deleted; that is, $P'\setminus \{S^2=0\}$ is a 
smooth submanifold of $TK$ (and $P\setminus \{S^2=0\}$ is a smooth submanifold of $T^*K$).
\end{prop}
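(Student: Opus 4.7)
The plan is to realize $P'\setminus\{S^2=0\}$ as a regular level set of a smooth map using the constants of motion and apply the regular value theorem; the analogous statement for $P\setminus\{S^2=0\}\subset T^*K$ then follows at once via the canonical diffeomorphism $TK\to T^*K$.

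Combining Proposition~\ref{onlytrap} with equations \eqref{Rscale}--\eqref{Thetascale}, I would first note that an off-axis photon $(x,p)\in TK$ with $E(x,p)>0$ is trapped if and only if
\begin{equation*}
g_x(p,p)=0,\qquad \Phi(x,p)=\Phi_{\text{trap}}(r(x)),\qquad Q(x,p)=Q_{\text{trap}}(r(x)).
\end{equation*}
The last two conditions amount to $R(r)=R'(r)=0$, i.e.\ $r(x)$ is a double root of $R$, and via the $R$-equation this forces $\dot r\equiv 0$ along the associated geodesic. On the open set $\mathcal U\subset TK$ where $S^2>0$, $E>0$, and $r$ lies in a small neighborhood of $[\hat r_1,\hat r_2]$, the off-axis photon region therefore coincides with $F^{-1}(0)$, where
\begin{equation*}
F(x,p)\defeq\bigl(g_x(p,p),\ \Phi(x,p)-\Phi_{\text{trap}}(r(x)),\ Q(x,p)-Q_{\text{trap}}(r(x))\bigr).
\end{equation*}

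Following the ``implicit function theorem twice'' strategy, I would split the submersion argument into two stages. The first application is already supplied by the Remark above the proposition: the null-cone bundle $N\defeq\{g_x(p,p)=0\}$ is a smooth $7$-submanifold of $TK$, because the vertical differential of $g_x(p,p)$ is $2p$, which is nonzero on null $p\neq 0$. The second application requires showing that the restriction of $F$ to $N$, equivalently, that
\begin{equation*}
G(x,p)\defeq\bigl(\Phi(x,p)-\Phi_{\text{trap}}(r(x)),\ Q(x,p)-Q_{\text{trap}}(r(x))\bigr),
\end{equation*}
is a submersion at each trapped off-axis photon. Since the $r$-dependent correction terms live entirely in the base, this reduces to checking that the fiberwise gradients of $g_x(p,p)$, $\Phi$ and $Q$, namely, up to nonzero scaling, the covectors $p_\mu$, $g_{\varphi\mu}+\Phi\,g_{t\mu}$ and $T_{\mu\nu}p^\nu+QE\,g_{t\mu}$ in $T_x^*K$, are linearly independent at every trapped off-axis photon.

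Verifying this independence is the main obstacle. I would approach it either by a direct computation in Boyer--Lindquist coordinates, using that at a trapped photon $p^1=0$, $p_\vartheta=\pm\sqrt{\Theta(\vartheta)}$, and $p^0,p^3$ are fixed by $E$ and $L=E\Phi_{\text{trap}}(r)$ via the metric, so that a putative dependence relation can be tested against the fiber basis $\partial_{p^t},\partial_{p^r},\partial_{p^\vartheta},\partial_{p^\varphi}$ and ruled out as long as $S^2>0$; or, more conceptually, by invoking the functional independence of the four constants of motion $g(p,p),E,L,\mathfrak Q$ at trapped off-axis photons (a reflection of the Liouville integrability of the Kerr geodesic flow), which via $\Phi=L/E$ and $Q=\mathfrak Q/E^2$ immediately yields the required rank. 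Once this step is in hand, the regular value theorem shows that $P'\cap\mathcal U$ is a smooth $5$-dimensional submanifold of $TK$, which is the claim of the proposition.
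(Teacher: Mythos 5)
Your reduction of $P'\setminus\{S^2=0\}$ to a level set of $F=(\mathrm q,\ \Phi-\Phi_{\text{trap}}(r),\ Q-Q_{\text{trap}}(r))$ is correct and is essentially the same device the paper uses (compare the function $h$ in Proposition~\ref{on-axis}); the identification of $F^{-1}(0)$ with the trapped set via $R(r)=R'(r)=0$ is also fine. The problem is that the step you defer as ``the main obstacle'' is the entire content of the proposition, and both routes you sketch for it break down. Your reduction to \emph{fiberwise} gradients fails at the latitudinal turning points of trapped photons, i.e.\ at points of $P'\setminus\{S^2=0\}$ with $p_\vartheta=0$ and $C\neq 0$ (these form a nonempty codimension-one subset of $P'$, since every trapped photon with $Q_{\text{trap}}>0$ oscillates between two latitudes). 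At such a point one also has $p_r=0$, and a direct check using $\mathfrak Q=p_\vartheta^2+C^2\bigl(p_\varphi^2/S^2-a^2p_t^2-a^2\mathrm q\bigr)$ shows that $\partial_{p_r}$ and $\partial_{p_\vartheta}$ of all of $\mathrm q$, $E$, $L$, $\mathfrak Q$ (hence of $\Phi$ and $Q$) vanish there. The three fiberwise gradients of $\mathrm q$, $\Phi$, $Q$ then lie in the two-dimensional span of $dp_t$ and $dp_\varphi$ and are necessarily linearly dependent, so no argument confined to the fiber directions can establish the submersion property at these points. For the same reason the appeal to Liouville integrability cannot work: the four constants of motion are functionally independent only off a degenerate locus, and the trapped set (where the radial polynomial has a double root) sits exactly inside it; moreover, even where they are independent, that independence involves the base directions, which your reduction has discarded together with the $r$-dependent correction terms.

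What actually closes the gap --- and what the paper's proof does --- is to use the base derivatives $\partial_r$ and $\partial_\vartheta$ at precisely these degenerate points: there the rank is rescued by the correction terms $\Phi_{\text{trap}}(r)$, $Q_{\text{trap}}(r)$ and the explicit $\vartheta$-dependence of the Carter constant, and one needs the specific facts recorded in Section~\ref{intro} that $\Phi_{\text{trap}}^2$ and $Q_{\text{trap}}$ never vanish at the same radius and that $Q_{\text{trap}}$ and $\frac{\partial Q_{\text{trap}}}{\partial r}$ never vanish at the same radius. This forces a case distinction (turning point or not, $C=0$ or not) which is exactly the computation in the paper's proof of Proposition~\ref{off-axis} and which your proposal does not carry out. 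As written, the proposal sets up the right framework but omits, and partially misdiagnoses, the step on which the result actually rests.
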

\begin{proof}
We will prove the statement for $P'\subseteq TK$.

By the equations of motion and the conditions for trapped photons (see \cite{oneill} and \cite{teo}), 
a Kerr geodesic $\gamma =(t,r,\vartheta, \varphi , p^0,p^1, p^2, p^3)$ is a trapped photon if and only if 
\begin{align}
 p^0 &= \frac{E}{\Delta \rho^2} \left(\mathscr A -2Mra\Phi_{\text{trap}}\right), \label{4}\\
 p^1 &=0,\label{4.2}\\
 (p^2)^2 &=\frac{E^2}{\rho^4}\left(Q_{\text{trap}}-\left(\frac{\Phi_{\text{trap}}^2}{S^2}-a^2\right)C^2\right), \\
 p^3 & = \frac{E}{\Delta\rho^2}\left(2Mra+(\rho^2-2Mr)\frac{\Phi_{\text{trap}}}{S^2}\right)\label{4end},
\end{align}
for some energy $E=-g_{00}p^0-g_{03}p^3>0$, where $ Q_{\text{trap}}$ and $\Phi_{\text{trap}}$ are the functions of $r$ which give the trapping conditions from \cite{teo} 
stated in Equations \eqref{teo} and \eqref{teoQ}. 

Note that these equation are invariant under the scaling 
\begin{equation*}
\(E, p^0, p^1, p^2, p^3\)\mapsto  \lambda\(E, p^0, p^1, p^2, p^3\)
\end{equation*}
 with $\lambda>0$, which allows us to get rid of one of the above equations:

Solving Equation~\eqref{4} for 
\begin{equation*}
e\defeq E( r, \vartheta, p^0)=\frac{p^0\Delta\rho^2}{\mathscr A-2Mra\Phi_{\text{trap}}}
\end{equation*}
and plugging it into Equations~\eqref{4.2}--\eqref{4end} yields 

\begin{align}
  p^1&=0 \label{3}\\
 (p^2)^2 &=\frac{e^2}{\rho^4}\left(Q_{\text{trap}}-\left(\frac{\Phi_{\text{trap}}^2}{S^2}-a^2\right)C^2\right), \label{3mid}\\
 p^3 &=  \frac{e}{\Delta\rho^2}\left(2Mra+(\rho^2-2Mr)\frac{\Phi_{\text{trap}}}{S^2}\right).\label{3end}
\end{align}

Hence, the off-axis photon region in the tangent bundle is the preimage of $0$ under the following smooth function $f=(f_1, f_2, f_3): TK\setminus\{S^2=0\}\rightarrow\mathbb R$,
\begin{align}
 f_1&\defeq p^1,\\
 f_2 &\defeq (p^2)^2 -\frac{e^2}{\rho^4}\left(Q_{\text{trap}}-\left(\frac{\Phi_{\text{trap}}^2}{S^2}-a^2\right)C^2\right), \label{e} \\
 f_3 & \defeq p^3 - \frac{e}{\Delta\rho^2}\left(2Mra+(\rho^2-2Mr)\frac{\Phi_{\text{trap}}}{S^2}\right).
\end{align}
We will show that the differential of $f$ has full rank, so that we can use the submersion theorem. 

Some partial derivatives of $f$ are: 
\begin{align*}
\frac{\partial f_1}{\partial p^1}&=1 \phantom{tatata} \frac{\partial f_1}{\partial p^3}=0\\
\frac{\partial f_2}{\partial p^1} &=0 \phantom{tatata} \frac{\partial f_2}{\partial p^2}=2p^2 \phantom{tatata} \frac{\partial f_2}{\partial p^3}=0\\
\frac{\partial f_2}{\partial r} &= \frac{\partial }{\partial r}\left(\frac{e^2}{\rho^4} \right)\left(Q_{\text{trap}}
-\left(\frac{\Phi_{\text{trap}}^2}{S^2}-a^2\right)C^2\right)+\frac{e^2}{\rho^4}\left(\frac{\partial Q_{\text{trap}}}{\partial r}-\frac{\partial \Phi^2_{\text{trap}}}{\partial r}\frac{C^2}{S^2}\right)\\
\frac{\partial f_2}{\partial \vartheta} &= \frac{\partial }{\partial \vartheta}\left(\frac{e^2}{\rho^4} \right)\left(Q_{\text{trap}}
-\left(\frac{\Phi_{\text{trap}}^2}{S^2}-a^2\right)C^2\right)-\frac{e^2}{\rho^4}2C\left(\frac{\Phi_{\text{trap}}^2}{S^3}+\left(\frac{\Phi_{\text{trap}}^2}{S^2}-a^2\right)S\right)
\\
\frac{\partial f_3}{\partial p^3}&=1
\end{align*}
 
By the above formulas for the partial derivatives of $f_1$ and $f_3$ we see that the differential of $f$ has at least rank 2. In order to show that $f$ is indeed submersive, 
we assume (towards a contradiction) that the differential of $f$ has rank $2$ at some point of the off-axis photon region $P'\setminus \{S^2=0\}$ in $TK$. 
Then $\frac{\partial f_2}{\partial p^2}$, $\frac{\partial f_2}{\partial r}$, and $\frac{\partial f_2}{\partial \vartheta}$ vanish at this point; and in particular
\begin{align*}
     \frac{\partial f_2}{\partial p^2}=2p^2=0
\end{align*}
implies 
\begin{align}
 \left(Q_{\text{trap}}-\left(\frac{\Phi_{\text{trap}}^2}{S^2}-a^2\right)C^2\right)=0\label{h1}
 \end{align}

because of Equation \eqref{e} and the fact that $e$ is non-zero at the photon region in $TK$.
Hence, $\frac{\partial f_2}{\partial r} =0$ and $\frac{\partial f_2}{\partial \vartheta} =0$ reduce to 
\begin{align}
\frac{\partial Q_{\text{trap}}}{\partial r}-\frac{\partial \Phi^2_{\text{trap}}}{\partial r}\frac{C^2}{S^2}=0,\label{h3}
 \end{align}
and 
\begin{align}
2C\left(\frac{\Phi_{\text{trap}}^2}{S^3}+\left(\frac{\Phi_{\text{trap}}^2}{S^2}-a^2\right)S\right)=0. \label{h2}
 \end{align}

In the \textbf{case $C\neq 0$}, combining Equations~\eqref{h1} and~\eqref{h2} yields

\begin{equation*}
 \Phi_{\text{trap}}^2+ Q_{\text{trap}}\frac{S^4}{C^2}=0. 
\end{equation*}
But since $Q_{\text{trap}}\geq 0$ in the region accessible to trapping, this implies that $\Phi_{\text{trap}}^2$ and $Q_{\text{trap}}$ vanish for the same value of $r$, which cannot happen (see Section~\ref{intro}).

In the \textbf{case $C = 0$}, Equations \eqref{h1} and \eqref{h3} give $Q_{\text{trap}}=0$ and $\frac{\partial Q_{\text{trap}}}{\partial r}=0$, which also cannot happen for the same radius $r$, see Section~\ref{intro}.

We have thus derived a contradiction from the assumption that the differential of $f$ does not have full rank somewhere on the off-axis photon region. 

Thus, $0$ is a regular value of $f$, and we may conclude by the submersion theorem that the photon region of the Kerr spacetime with deleted axis is a submanifold of the tangent bundle $TK$. 

\end{proof}

Due to the coordinate failure, we have to deal with the axis separately. To do this, we will once again use the submersion theorem, this time applying it to a different function $TK\rightarrow\mathbb R^3$. 

\begin{prop}\label{on-axis}
 For a subcritical Kerr spacetime $K$, there is a neighborhood of the North (resp.\;South) Pole of the photon region $P'$ which is a smooth submanifold of $TK$ 
(or equivalently, a neighborhood of the North (resp.\;South) Pole of the photon region $P$ which is a smooth submanifold of $T^*K$).
\end{prop}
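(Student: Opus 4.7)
The plan is to mimic Proposition~\ref{off-axis} but with a defining map that is manifestly smooth across the axis. The singularity in the map $f$ used there came from $\Phi_{\text{trap}}^2/S^2$--type terms; to avoid this, I would work directly with the conserved quantities $\mathrm q, E, L, \mathfrak Q$, which are smooth functions on $TK$ because $\partial_t$, $\partial_\varphi$ and the Killing tensor $T$ extend smoothly across the axis (see~\cite{oneill}). First I would pass to coordinates on $K$ that are smooth near the axis, e.g.\ replacing $(\vartheta, \varphi)$ by $(x,y) \defeq (\vartheta\cos\varphi, \vartheta\sin\varphi)$ so that the axis becomes $\{x=y=0\}$; this gives smooth bundle coordinates on $TK$ in a neighborhood $U$ of the North Pole fibre. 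In these coordinates $\partial_\varphi = x\partial_y - y\partial_x$ vanishes on the axis, so the function $L = g(\cdot, \partial_\varphi)$ vanishes on every vector based on the axis, while the pole itself sits at $r = r_m$ where $\Phi_{\text{trap}}(r_m) = 0$.

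Second, define the smooth map $F = (F_1, F_2, F_3): TK \to \mathbb R^3$ by
\begin{equation*}
F_1 \defeq \mathrm q, \qquad F_2 \defeq \Phi_{\text{trap}}(r)\, E - L, \qquad F_3 \defeq Q_{\text{trap}}(r)\, E^2 - \mathfrak Q.
\end{equation*}
For a future-directed null vector $v$ in the DOC with $E(v)>0$ (which holds near the pole since $\partial_t$ is timelike there), the conditions $F_2(v) = F_3(v) = 0$ are equivalent to $\Phi(v) = \Phi_{\text{trap}}(r)$ and $Q(v) = Q_{\text{trap}}(r)$; combined with $\mathrm q(v) = 0$ these are exactly the Teo conditions~\eqref{teo},~\eqref{teoQ} characterizing trapped photons by Proposition~\ref{onlytrap}. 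Hence $F^{-1}(0) \cap U = P' \cap U$ after possibly shrinking $U$.

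The main obstacle is verifying that $0$ is a regular value of $F$ on $P' \cap U$. The rank computation is delicate at pole-point photons because of the simultaneous degenerations $\Phi_{\text{trap}}(r_m) = 0$, $L \equiv 0$ on the axis (so $dL|_{\text{fibre}} = 0$ there), and $\partial_\varphi = 0$. The plan is to exploit the remaining independent contributions: $dF_1|_{\text{fibre}} = 2g(\cdot, v)$ is nonzero since $v$ is a nonzero null vector; $dF_2$ carries the base term $\Phi'_{\text{trap}}(r_m)\, E\, dr$ (with $\Phi'_{\text{trap}}(r_m) \neq 0$ by strict monotonicity of $\Phi_{\text{trap}}$); and $dF_3$ contributes both a base term $Q'_{\text{trap}}(r_m)\, E^2\, dr$ (nonzero because $r_m < 3M$ lies in the strictly increasing region of $Q_{\text{trap}}$, see Section~\ref{intro}) and fibre directions involving $T(v, \cdot)$ and $g(\partial_t, \cdot)$. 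The hard part is verifying that $dF_1, dF_2, dF_3$ are linearly independent at each pole-point trapped photon; using the explicit form $T^{\mu\nu} = r^{-2} g^{\mu\nu} + 2\rho^2 l^{(\mu} n^{\nu)}$ and the fact that at the axis the principal null vectors $l, n$ lie in $\operatorname{span}(\partial_t, \partial_r)$ while a trapped photon has $v^r = 0$, the check reduces to the linear independence of $v, l, n$, yielding rank $3$. The submersion theorem then yields the submanifold structure at the pole, and by openness of the rank condition it extends to a full neighborhood of the pole fibre.
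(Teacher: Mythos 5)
Your strategy is in fact the same as the paper's: the same defining map (your $F$ is, up to signs and ordering, the paper's $h=(\mathrm q,\ \mathfrak Q-E^2Q_{\text{trap}},\ L-E\Phi_{\text{trap}})$), the same passage to Cartesian-type coordinates regular across the axis, and the same appeal to the submersion theorem. The gap sits exactly where the real work is: the rank check at an on-axis trapped photon. Your reduction of that check to ``the linear independence of $v,l,n$'' does not suffice. At an axis point one has $\operatorname{span}(l,n)=\operatorname{span}(\partial_t,\partial_r)$ (because $\partial_\varphi$ vanishes there), so $\partial_t\in\operatorname{span}(l,n)$ and the triple $l,n,\partial_t$ is linearly \emph{de}pendent. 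The fibre differential of your $F_3$ is $-2EQ_{\text{trap}}\,\partial_t-2r^{-2}v-2\rho^2\left((n\cdot p)\,l+(l\cdot p)\,n\right)$; the first term (coming from $d(E^2)$, which you list and then drop from the reduction) lies in $\operatorname{span}(l,n)$. After subtracting the multiple of $dF_1|_{\text{fibre}}=2v$ dictated by the component of $v$ transverse to $\operatorname{span}(\partial_t,\partial_r)$, what remains to be shown nonzero is the specific combination $EQ_{\text{trap}}\partial_t+\rho^2\left((n\cdot p)l+(l\cdot p)n\right)$, which is a multiple of $\partial_t$ alone, proportional to $Q_{\text{trap}}(r)-\tfrac{(r^2+a^2)^2}{\Delta}$. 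Linear independence of $v,l,n$ (which does hold, since $v^r=0$ but $v\notin\operatorname{span}(\partial_t,\partial_r)$) says nothing about whether this scalar vanishes. One must evaluate $Q_{\text{trap}}$ on the axis: plugging $\Phi=0$ into $R(r)/E^2=0$ gives $Q=\tfrac1\Delta\left(r^4+a^2r^2+2Ma^2r\right)$, whence the scalar equals $-a^2\neq0$. This computation --- the quantity $\heartsuit$ in the paper's proof --- is the actual crux of the proposition and is precisely where the hypothesis $a\neq0$ enters; as written, your argument asserts rank $3$ without it.

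Two smaller points. First, your appeal to $Q_{\text{trap}}'(r_m)\neq0$ is both unjustified as stated (strict monotonicity on $[\hat r_1,3M]$ does not by itself exclude a critical point at the interior point $r_m$) and unnecessary: the base $dr$-components are not needed, because once the fibre components of $dF_1,dF_3$ are handled, $dF_2$ is taken care of by the nonvanishing $dx,dy$-components of $dL$ on the axis (namely $\frac{r^2+a^2}{r^2}\widetilde p^3$ and $-\frac{r^2+a^2}{r^2}\widetilde p^2$, not both zero since radial photons are not trapped), its fibre components all vanishing there. Second, in identifying $F^{-1}(0)\cap U$ with $P'\cap U$ you should note explicitly that $\Phi=\Phi_{\text{trap}}(r)$ and $Q=Q_{\text{trap}}(r)$ force $R(r)=0$ and hence $p^1=0$ automatically, so that no separate equation for $p^1$ is required; this is implicit but load-bearing in your bijection claim.
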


\begin{proof}
 
 We consider the function $h:TK\rightarrow \mathbb R^3$ given by 
 \begin{equation*}
h=\left(h_1, h_2, h_3\right)\defeq(\mathrm q, \mathfrak{Q}-E^2 \cdot Q_{\text{trap}}, L  -E\cdot\Phi_{\text{trap}}), 
\end{equation*}

where $Q_{\text{trap}}$, $\Phi_{\text{trap}}$ are again the functions given in Equations \eqref{teo} and \eqref{teoQ}. 

Clearly, the upper hemisphere of the photon region can be described as \begin{equation*}\{\(m,p\)\in TK: p^0(\(m,p\))>0, h(\(m,p\))=0\}.\end{equation*} 

The task is to show that $h$ is a submersion in a neighborhood of $P'\cap\{S^2=0\}$ in $ TK$. For the calculations near the axis, we change coordinates 
\begin{equation*}(t,r,\vartheta, \varphi)\mapsto (t,r, x\defeq r \cos\varphi\sin\vartheta, y\defeq r \sin\varphi\sin\vartheta).\end{equation*}

  We can work with the new coordinates on each hemisphere; for simplicity let us only consider the Northern Hemisphere, which suffices by the equatorial symmetry. 
Since also $p^2, p^3$ fail on the axis, we change the $p^i$ accordingly $
(p^0, p^1, p^2, p^3)\mapsto (\widetilde{p}^0, \widetilde{p}^1, \widetilde{p}^2, \widetilde{p}^3)$, 
 where $\widetilde p^0, \widetilde{p}^1, \widetilde{p}^2, \widetilde{p}^3$ are the natural coordinates adapted to our new coordinate system on the Northern Hemisphere of $K$, i.e.,
\begin{align*}
 p^0 &=\widetilde p^0, \hspace{1cm} p^1=\widetilde{p}^1, \\  
 p^2&=-\frac{(x^2+y^2)^{\frac 1 2 }}{zr}\widetilde{p}^1+\frac{1}{z(x^2+y^2)^{\frac{1}{2}}}\left(x\widetilde{p}^2+y\widetilde{p}^3\right), \\
  p^3&=\frac{1}{x^2+y^2}\left(-y\widetilde{p}^2+x\widetilde{p}^3\right),
	\end{align*}
	using the abbreviation $z\defeq\sqrt{r^2-x^2-y^2}$.
	
	We note that \begin{align*}
p^2&=\mathcal O(1) \phantom{tatatatai}  \frac{\partial p^2}{\partial \widetilde p^2}=\mathcal O(1) \phantom{tatatatai}  \frac{\partial p^2}{\partial \widetilde p^3}=\mathcal O(1)\\
p^3&=\mathcal O(S^{-1})\phantom{tatata}  \frac{\partial p^3}{\partial \widetilde p^2}=\mathcal O(S^{-1}) \phantom{tatata}  \frac{\partial p^3}{\partial \widetilde p^3}=\mathcal O(S^{-1})
  \end{align*}
 
 as $S\rightarrow 0$.
 
Furthermore, the metric components with respect to the new coordinates are given on the axis as
\begin{equation*}
\widetilde g_{\alpha \beta}=diag \left(-\frac{\Delta}{r^2+a^2}, \frac{r^2+a^2}{\Delta}, \frac{r^2+a^2}{r^2}, \frac{r^2+a^2}{r^2}\right). 
  \end{equation*}

We calculate that on the axis
 \begin{align*}
 \frac{\partial \mathrm q}{\partial \widetilde p^0}  &=-2 \frac{\Delta}{r^2+a^2} \widetilde p^0,\\
  \frac{\partial \mathrm q}{\partial \widetilde p^2}& =2 \frac{r^2+a^2}{r^2} \widetilde p^2, \\
	  \frac{\partial \mathrm q}{\partial \widetilde p^3}& =2 \frac{r^2+a^2}{r^2} \widetilde p^3.
  \end{align*}

 For a function $G: TK\rightarrow \mathbb R$, we will use the notation $G=\mathcal O_{1, \widetilde p} (S)$ to subsume $G=\mathcal O(S)$ and $\frac{\partial G}{\partial \widetilde p^\alpha}=\mathcal O(S)$ for all $0\leq \alpha\leq 3$. 

To calculate some partial derivatives of the Carter constant
\begin{equation*}\mathfrak Q=r^2\mathrm q +\frac{1}{\Delta}\left((r^2+a^2)^2(p_0)^2-\Delta ^2 (p_1)^2+a^2 (p_3)^2 +2(r^2+a^2)ap_0p_3\right) \end{equation*}
on the axis, we first note that $p_0=\left(-1+\frac{2Mr}{\rho^2}\right)\widetilde p^0+ \mathcal O_{1, \widetilde p} (S)$, $p_1=\frac{\rho^2}{\Delta}\widetilde p^1$, 
and $p_3=\mathcal O_{1, \widetilde p} (S)$ as $S\to 0$. 

This gives 
\begin{equation*}
\mathfrak Q=r^2\mathrm q +\frac{1}{\Delta}\left(\left(r^2+a^2\right)^2\left(-1+\frac{2Mr}{\rho^2}\right)^2\left(\widetilde p^0\right)^2-\rho^4\left(\widetilde p^1\right)^2\right)+ \mathcal O_{1, \widetilde p} (S)
\end{equation*} 
as $S\to 0$, and hence 

\begin{align*}
  & \frac{\partial \mathfrak Q}{\partial \widetilde p^0}\left|_{S^2=0}  =  r^2 \frac{\partial \mathrm q}{\partial \widetilde p^0}\left|_{S^2=0}  \right. \right.
  +2\Delta \widetilde p^0  ,   \\ 
   & \frac{\partial \mathfrak Q}{\partial \widetilde p^\alpha}\left|_{S^2=0}  =  r^2 \frac{\partial \mathrm q}{\partial \widetilde p^\alpha}\left|_{S^2=0}  \ \text{for}\ \alpha=2,3.\right.\right.
\end{align*}

Similarly, we calculate

\begin{align*}
 L   =  \langle \partial_\varphi, \dot\gamma\rangle & = -2 \frac{MraS^2}{\rho^2}p^0+\left(r^2+a^2+\frac{2Mra^2S^2}{\rho^2}\right)S^2 p^3\\
  & = -2 \frac{MraS^2}{\rho^2}\widetilde p^0+\frac{1}{r^2}\left(r^2+a^2+\frac{2Mra^2S^2}{\rho^2}\right)  \left(-y\widetilde p^2 + x\widetilde p^3\right), 
\end{align*}
and get, using $\frac{\partial S^2}{\partial x}=\mathcal O(S)$ and  $\frac{\partial S^2}{\partial y}=\mathcal O(S)$:

\begin{align*}
         &  \frac{\partial L}{\partial \widetilde p^\alpha}\left|_{S^2=0}  =  0 \right.  \ \forall \ 0\leq \alpha\leq 3, \\
       & \frac{\partial L}{\partial x}\left|_{S^2=0} = \frac{r^2+a^2}{r^2}\widetilde p^3 \right., \\
       & \frac{\partial L}{\partial y}\left|_{S^2=0} = -\frac{r^2+a^2}{r^2}\widetilde p^2\right. .
         \end{align*}
Finally,
\begin{equation*}
 E=\langle\partial_t, \dot \gamma\rangle =\left(-1+\frac{2Mr}{\rho^2}\right)\widetilde p^0+\frac{2Ma}{r\rho^2}\left(-y\widetilde p^2+x\widetilde p^3\right),
\end{equation*}
so that 
 \begin{align*}
         & \frac{\partial E}{\partial \widetilde p^0}\left|_{S^2=0}  = -\frac{\Delta}{r^2+a^2}\right. ,  \\
         & \frac{\partial E}{\partial \widetilde p^\alpha}\left|_{S^2=0}  = 0 \right. \phantom{tafa} \forall \ 1\leq \alpha \leq 3.
 \end{align*}

Making use of the previous calculations, we see that the differential of 
\[h=(\mathrm q, \mathfrak{Q}-E^2 \cdot Q_{\text{trap}}, L  -E\cdot\Phi_{\text{trap}})\]
in terms of coordinates $(r,x,y,\widetilde p^\alpha)$ at an axis point $\{S^2=0\}$ of the photon region contains the submatrix

\begin{equation*}
  \begin{pmatrix}
    \frac{\partial h_j}{\partial \widetilde p^0} &  \frac{\partial h_j}{\partial \widetilde p^2} & \frac{\partial h_j}{\partial \widetilde p^3} & \frac{\partial h_j}{\partial x} &  \frac{\partial h_j}{\partial y}  \\
   
  \end{pmatrix}\left|_{S^2=0}\right. 
  \end{equation*}
  \begin{equation*}
  = \begin{pmatrix}
     -2\frac{\Delta}{r^2+a^2}\widetilde p^0  & 
  2 \frac{r^2+a^2}{r^2}\widetilde p^2 & 2 \frac{r^2+a^2}{r^2}\widetilde p^3 & * & *\\
   -2\frac{\Delta}{r^2+a^2} r^2\widetilde p^0 + \heartsuit & 
  2 \left(r^2+a^2\right)\widetilde p^2 & 2 \left(r^2+a^2\right)\widetilde p^3 & * & *\\
  0&0&0&  \frac{r^2+a^2}{r^2}\widetilde p^3& -\frac{r^2+a^2}{r^2}\widetilde p^2
   
  \end{pmatrix}
\end{equation*}
with $\heartsuit\defeq  2\Delta \widetilde p^0 \left(1-Q_\text{trap}(r)\frac{\Delta}{\left(r^2+a^2\right)^2}\right)$. 
 
Here we have already made use of the fact that $\Phi_{\text{trap}}$ vanishes at axis points of the photon region (see Section~\ref{intro}). 

First, note that at least one of the partial derivatives $\frac{\partial h_1}{\partial \widetilde p^2}|_{ S^2=0}=2 \frac{r^2+a^2}{r^2}\widetilde p^2 $ 
or $\frac{\partial h_1}{\partial \widetilde p^3}|_{ S^2=0}=2 \frac{r^2+a^2}{r^2}\widetilde p^3$ is non-zero, 
since we can exclude $\widetilde p^2=\widetilde p^3=0$. (Photons on the axis with $\widetilde p^2=\widetilde p^3=0$ are radial in- or outgoers, hence not trapped.) Similarly, the last row of the matrix cannot vanish. 
To see that $h$ is submersive where the photon region intersects the axis (in the tangent bundle), it is enough to show that $\heartsuit\neq 0$ for trapped photons on the axis. Since obviously $2\widetilde p^0\Delta\neq 0$, 
for photons in the DOC, one only needs to show that $\left(1-Q_\text{trap}(r)\frac{\Delta}{r^2+a^2}\right)\neq 0$. 
Since $\Phi=0$ for trapped photons on the axis, we plug $\Phi=0$ into $\frac{R(r)}{E^2}=0$ and solve for $Q$. This gives $Q=\frac{1}{\Delta}\left(r^4+a^2r^2+2Ma^2r\right)$ for every trapped photon on the axis. 
Hence, by $Q= Q_\text{trap}$, we get 
\begin{align*}
 1-Q_\text{trap}(r)\frac{\Delta}{\left(r^2+a^2\right)^2} & = \frac{\Delta a^2}{\left(r^2+a^2\right)^2} >0,
\end{align*}

which makes $\heartsuit$ non-zero for trapped photons on the axis.

We have thus seen that the differential of $h$ is surjective at every axis point of the photon region $P'$ in the tangent bundle. By the submersion theorem we may conclude 
that a neighborhood of the North (similarly: South) Pole of $P'$ is a submanifold of $TK$. 

\end{proof}
From Propositions \eqref{on-axis} and \eqref{off-axis}, we immediatley get the following: 
\begin{thm}\label{submanifold}
 For a subcritical Kerr spacetime $K$, the photon region $P'$ in $TK$ and the photon region $P$ in $T^*K$ are smooth submanifolds.
\end{thm}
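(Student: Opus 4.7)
The plan is to glue Propositions \ref{off-axis} and \ref{on-axis} together via the local nature of the submanifold condition: a subset $N \subseteq M$ of a smooth manifold is a smooth submanifold if and only if every point of $N$ admits an open neighborhood $U$ in $M$ for which $N \cap U$ is a smooth submanifold of $U$. Accordingly, I would first partition the points of $P'$ into those whose base point lies off the axis $\{S^2 = 0\}$ and those whose base point lies on it. Since the region accessible to trapping meets the axis in exactly the North and South Poles (as recalled in Section~\ref{intro}), every on-axis point of $P'$ sits in a fiber above one of these two base points.

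For the off-axis points, Proposition \ref{off-axis} already produces a neighborhood in $TK \setminus \{S^2 = 0\}$ on which $P'$ is the zero-set of the submersion $f$ into $\mathbb{R}^3$, hence a smooth $5$-submanifold of $TK$. For the on-axis points, Proposition \ref{on-axis} produces a neighborhood in $TK$ on which $P'$ (intersected with the open half-space $\widetilde p^0 > 0$ that selects future-directed photons) is the zero-set of the submersion $h$ into $\mathbb{R}^3$, again a smooth $5$-submanifold of $TK$. Since both local descriptions cut out submanifolds of the same codimension $3$, their union is a smooth $5$-dimensional submanifold of $TK$, which proves the claim for $P'$.

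The cotangent version follows at once: the musical isomorphism $\flat \colon TK \to T^*K$ induced by the non-degenerate metric $g$ is a diffeomorphism, and $P$ is by definition the image $\flat(P')$, so $P$ inherits the submanifold structure from $P'$. The only genuine obstacle here is logistical rather than mathematical, namely ensuring that the off-axis and on-axis neighborhoods together cover every point of $P'$; but this is automatic from the fact that the axis intersects the region accessible to trapping in just the two poles, so that no further case analysis is needed and the substantive work remains contained in the two previous propositions.
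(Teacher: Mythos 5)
Your proposal is correct and matches the paper's approach exactly: the paper derives Theorem~\ref{submanifold} immediately from Propositions~\ref{off-axis} and~\ref{on-axis}, and your write-up simply makes explicit the (routine) locality-of-the-submanifold-condition gluing and the transfer to $T^*K$ via the metric isomorphism that the paper leaves implicit.
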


\section{The Topology of the Kerr Photon Region in the (Co-)tangent Bundle}\label{sec:topo}

We now turn our attention to the topology of $P'$ and $P$. From now on, it will be more useful to work with $P\subseteq T^*K$. 
We will prove the following theorem towards the end of this section and first show important lemmata and propositions to be used in the proof. 
The claim of Theorem~\ref{topo} follows from Dyatlov's implicit function theorem argument in Section 3 of~\cite{dyatlov}. 

\begin{thm}\label{topo}
 The Kerr photon region and in particular the Schwarzschild photon sphere in the (co-)tangent bundle have topology $SO(3)\times \mathbb R^2$. 
\end{thm}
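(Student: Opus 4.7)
The plan is to factor out the obvious continuous symmetries of $P$ so as to reduce the problem to a closed $3$-manifold $M_3$, and then to identify $M_3$ with $SO(3)$ by computing its fundamental group via Seifert--van Kampen and applying the classification of closed orientable $3$-manifolds. As a first step I would exhibit the $\mathbb R^2$ factor: $P$ carries two commuting smooth free actions, the $\mathbb R$-action by the flow of the stationary Killing field $\partial_t$ and the $\mathbb R_{>0}$-action by positive scaling $(t,x,p)\mapsto(t,x,\lambda p)$, which preserves $P$ since the trapping conditions \eqref{3}--\eqref{3end} are homogeneous in $p$ and require $E>0$. Using compactness of the region accessible to trapping modulo $\partial_t$, the combined $\mathbb R\times\mathbb R_{>0}\cong\mathbb R^2$-action is proper; hence $M_3\defeq P/(\mathbb R\times\mathbb R_{>0})$ inherits a smooth $3$-manifold structure from Theorem~\ref{submanifold}, and, $\mathbb R^2$ being contractible, the principal bundle $P\to M_3$ is trivial, so $P\cong M_3\times\mathbb R^2$.

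Next I would describe $M_3$ concretely. Normalizing representatives by $t=0$ and $E=1$, a point of $M_3$ is encoded by $(r,\vartheta,\varphi,\sigma)$, where $r\in[\hat r_1,\hat r_2]$ determines $\Phi_{\text{trap}}(r)$ and $Q_{\text{trap}}(r)$, the latitude $\vartheta$ runs over the interval $[\vartheta_-(r),\vartheta_+(r)]$ cut out by the scaled $\Theta$-equation \eqref{Thetascale}, $\varphi\in S^1$ is the azimuth, and $\sigma\in\{\pm 1\}$ is the sign of $\dot\vartheta$, subject to the identifications $(\vartheta_\pm(r),\sigma)\sim(\vartheta_\pm(r),-\sigma)$ at turning latitudes and to the collapse of the $\varphi$-circle at the poles, which are reached only at the single radius $r=r_m$ where $\Phi_{\text{trap}}=0$. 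The $S^1$-action by $\varphi$-translation is free on $M_3$ (the only potential fixed points would come from axis-crossing photons, whose non-zero tangential velocity the action genuinely rotates), and one checks that the quotient $M_3/S^1$ is homeomorphic to $S^2$, so $M_3$ is an $S^1$-bundle over $S^2$. Partitioning $M_3$ by $r$ also provides a Heegaard-type splitting $M_3=V_1\cup_{T^2}V_2$, where each $V_i$ is a tubular neighborhood of the equatorial trapped-photon circle at $r=\hat r_i$, hence a solid torus with core the $\varphi$-circle and meridian the collapsing $(\vartheta,\sigma)$-loop.

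I would then apply Seifert--van Kampen to this splitting. In the basis of $\pi_1(T^2)=\mathbb Z^2$ on the common boundary torus given by the $\varphi$-circle and the $(\vartheta,\sigma)$-loop at some intermediate radius $r_0$, each of the two meridians yields one relation; combining these should give the presentation $\pi_1(M_3)=\langle x\mid x^2\rangle\cong\mathbb Z/2$ (equivalently, the circle bundle $M_3\to S^2$ has Euler number $\pm 2$). This is the step I expect to be the main obstacle: tracking the meridional $(\vartheta,\sigma)$-loops as $r$ sweeps from $\hat r_1$ through $r_m$ to $\hat r_2$ requires working across the coordinate failures of $\varphi$ on the axis (at $r=r_m$) and of $\sigma$ at turning latitudes, so one has to patch charts in the spirit of the proof of Proposition~\ref{on-axis} and carefully record the ``twist'' of the $\varphi$-fiber as the base point passes through the poles.

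Finally, having shown that $M_3$ is a closed, connected, orientable $3$-manifold (orientability following from that of the $S^1$-bundle over the simply connected $S^2$) with $\pi_1(M_3)\cong\mathbb Z/2$, I would invoke the classification of closed orientable $3$-manifolds with finite fundamental group (elliptization, due to Perelman) to conclude $M_3\cong\mathbb{RP}^3\cong SO(3)$; alternatively, the much more elementary classification of lens spaces applied to the Heegaard-genus-$1$ splitting above already identifies $M_3=L(2,1)=\mathbb{RP}^3$. Combined with the product decomposition $P\cong M_3\times\mathbb R^2$ from the first step, this yields the claim for Kerr, while the Schwarzschild part of the statement is covered by the remark earlier in Section~\ref{inphase} identifying the Schwarzschild photon sphere in the cotangent bundle directly as the unit tangent bundle of $S^2$.
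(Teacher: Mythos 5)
Your overall strategy---pass to a closed $3$-manifold slice, decompose it into two solid tori glued along a $T^2$-collar, compute $\pi_1$ via Seifert--van Kampen, and finish with the classification of closed $3$-manifolds---is exactly the strategy of the paper, but your decomposition is genuinely different: you cut by \emph{radius}, taking the cores of your solid tori to be the equatorial photon circles at $r=\hat r_1$ and $r=\hat r_2$, whereas the paper cuts by \emph{latitude}, taking $U_\text{North}$ and $U_\text{South}$ to be solid-torus neighborhoods of the two polar direction circles and $U_\text{Eq}$ to be the $T^2\times I$ collar in between (Lemmata~\ref{topoeq} and~\ref{toponorth}). Your radial splitting is plausible (away from the two extreme circles the level sets of $r$ are tori, and $r$ should be a submersion there, including across $r=r_m$ where the polar circles sit inside a torus leaf), and your closing observation is a genuine improvement in principle: a genus-$1$ Heegaard splitting identifies $M_3$ as a lens space by classical arguments, so one could avoid invoking Perelman's elliptization theorem altogether---the paper's own decomposition is also secretly a genus-$1$ splitting, but the paper nevertheless concludes via Poincar\'e plus elliptization. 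Your reduction to $M_3$ via a proper free $\R^2$-action is also fine, though the paper does this more directly by restricting to the global slice $\{t=0,\,p_0=-1\}$.

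The genuine gap is the one you flag yourself: you never compute the gluing. Everything hinges on showing that the meridian of $V_2$, transported through the collar to $\partial V_1$, represents $m\pm 2\ell$ (equivalently, that the circle bundle $M_3\to \Sp^2$ has Euler number $\pm 2$, equivalently that the Seifert--van Kampen relations collapse the free product to $\mathbb Z_2$); without this the same argument would accept $\Sp^3$ ($\pi_1=1$) or $\Sp^2\times\Sp^1$ ($\pi_1=\mathbb Z$) as answers, and ``should give $\langle x\mid x^2\rangle$'' is precisely the assertion that needs proof. This computation is the technical bulk of the paper's Section~\ref{sec:topo}: one needs honest coordinates on the solid-torus pieces across the degeneracies (the paper builds the chart $H$ on the equatorial band in Lemma~\ref{topoeq}, and uses the concavity statement of Lemma~\ref{convex} to show that $\operatorname{atan2}(\widetilde p_2,\widetilde p_3)$ is a legitimate fiber coordinate on the pole caps in Lemma~\ref{toponorth}), and then one must explicitly compute the winding numbers of the transported generators---in the paper this is the index computation for $P_3\circ\Psi\circ\gamma_{i,\lambda}$ in Proposition~\ref{funda}, where the crucial sign flip between the Northern and Southern caps (index $+1$ versus $-1$) is what produces the relation $x^2=1$ rather than $x=1$ or no relation at all. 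In your radial picture the analogous difficulty is tracking the $(\vartheta,\sigma)$-loop as $r$ crosses $r_m$, where it sweeps through the axis and picks up the twist of the $\varphi$-fiber; until that bookkeeping is done, the proof is not complete.
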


\begin{proof}\renewcommand{\qedsymbol}{}

Since the photon region in any member of the Kerr family is invariant under time translation $\partial_t$ and under rescaling of energy $\left(E, p^0, p^1, p^2, p^3\right)\mapsto  \lambda\left(E, p^0, p^1, p^2, p^3\right)$, $\lambda >0$, 
we may therefore restrict our attention to a $6$-dimensional  slice 
\begin{equation*}
\{t=0, p_0(=-E)=-1\}
\end{equation*}
in the cotangent bundle and show that the photon region in this slice, 
\begin{equation*}
P_0\defeq P\cap\{t=0, p_0(=-E)=-1\},\end{equation*}
 has topology $SO(3)$.
\end{proof}

In the Schwarzschild case, we see immediately that $P_0$ is the bundle of tangent $1$-spheres to the $2$-sphere $\{r=3M\}$. Such a sphere bundle is, of course, topologically just $T^1\mathbb S^2 $, and it is well-known that $T^1\mathbb S^2$ is homeomorphic to $SO(3)$.

The remainder of this section is dedicated to the proof of the rotating Kerr case. In order to construct explicit bundle charts of $P_0$, we first prove the following

\begin{lem}\label{overliner}
Consider a Kerr spacetime with $0<a<M$.
 \begin{enumerate}
  \item 
There are smooth functions $r_\text{min}, r_\text{max}: \left(0, \pi \right) \rightarrow \mathbb R$ that give, for every $\vartheta\in \left(0, \pi \right)$, the minimal and maximal Boyer--Lindquist radii where trapped photons with latitude $\vartheta$ may be located.
 
\item There is a smooth function $\overline r: \left(0, \pi \right)\times [-1,1]\rightarrow \mathbb R$ with 
\begin{align*}
& \overline r(\vartheta, -1)=r_\text{min}\\
&\overline r(\vartheta,0)=r_m, \text{ and} \\
&\overline r (\vartheta, 1)=r_\text{max}
\end{align*}
for all $\vartheta \in \left(0, \pi \right)$, and with $\overline r \left(\tfrac\pi 2 +\vartheta, \cdot\right)=\overline r \left(\tfrac\pi 2 -\vartheta, \cdot\right)$ for all $\vartheta \in \left(0, \tfrac\pi 2\right)$, 
\end{enumerate}
(where $r_m$ is given by Formula~\eqref{rm}), and with the additional property that 
$\overline r (\vartheta , \cdot)$ has a smooth inverse for every $\vartheta\in(0,\pi)$.
\end{lem}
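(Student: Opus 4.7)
The plan is to read off $r_\text{min}, r_\text{max}$ from the trapping condition in the scaled $\Theta$-equation, apply the implicit function theorem for smoothness, and then construct $\overline r$ by an explicit interpolation. Define the smooth function
\begin{equation*}
F(r,\vartheta)\defeq Q_\text{trap}(r)\sin^2\vartheta-\Phi_\text{trap}(r)^2\cos^2\vartheta+a^2\sin^2\vartheta\cos^2\vartheta
\end{equation*}
on $(\hat r_1,\hat r_2)\times(0,\pi)$. By the scaled $\Theta$-equation \eqref{Thetascale}, a spherical photon orbit of radius $r$ reaches latitude $\vartheta$ iff $F(r,\vartheta)\geq 0$, so $r_\text{min}(\vartheta), r_\text{max}(\vartheta)$ are the minimum and maximum of $\{r\in[\hat r_1,\hat r_2]:F(r,\vartheta)\geq 0\}$. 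Using $\Phi_\text{trap}(r_m)=0$ yields $F(r_m,\vartheta)>0$, and the inequality $\Phi_\text{trap}(\hat r_i)^2>a^2$ (derivable from \eqref{teo} at the roots of $Q_\text{trap}$) yields $F(\hat r_i,\vartheta)<0$ for $\vartheta\in(0,\pi)\setminus\{\pi/2\}$; the intermediate value theorem then gives $r_\text{min}(\vartheta)\in(\hat r_1,r_m)$, $r_\text{max}(\vartheta)\in(r_m,\hat r_2)$, while $r_\text{min}(\pi/2)=\hat r_1, r_\text{max}(\pi/2)=\hat r_2$.

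Smoothness of $r_\text{min}, r_\text{max}$ follows from the IFT applied to $F=0$ once $\partial_r F\neq 0$ is verified at these roots. Differentiating yields $\partial_r F=Q_\text{trap}'\sin^2\vartheta-2\Phi_\text{trap}\Phi_\text{trap}'\cos^2\vartheta$; combining $\Phi_\text{trap}'<0$, the fact $r_m<3M$ (since $\Phi_\text{trap}(3M)=-2a<0$), the signs of $\Phi_\text{trap}$ on either side of $r_m$, and the monotonicity of $Q_\text{trap}$ recalled in Section~\ref{intro}, one finds $\partial_r F>0$ at $r_\text{min}$ and $\partial_r F<0$ at $r_\text{max}$, the latter after a case split on whether $r_\text{max}\lessgtr 3M$. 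At $\vartheta=\pi/2$ the IFT applies via $Q_\text{trap}'(\hat r_i)\neq 0$, and equatorial symmetry of $r_\text{min}, r_\text{max}$ is immediate from $F(r,\vartheta)=F(r,\pi-\vartheta)$.

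For the interpolant, set $L(\vartheta)\defeq r_m-r_\text{min}(\vartheta)$ and $U(\vartheta)\defeq r_\text{max}(\vartheta)-r_m$ (smooth and strictly positive on $(0,\pi)$), and define
\begin{equation*}
\overline r(\vartheta,s)\defeq r_m+s\cdot L(\vartheta)^{(1-s)/2}\,U(\vartheta)^{(1+s)/2}.
\end{equation*}
The three endpoint identities at $s\in\{-1,0,1\}$, joint smoothness, and equatorial symmetry are immediate. A direct calculation gives $\partial_s\overline r=\sqrt{LU}\,e^{s\nu}(1+s\nu)$ with $\nu\defeq\tfrac12\ln(U/L)$, which is strictly positive when $|\nu|<1$; near the poles the leading expansion $r_\text{min}, r_\text{max}\sim r_m\pm c\vartheta$ forces $\nu\to 0$, and a uniform bound on $|\nu|$ over $(0,\pi)$ then yields global monotonicity and a smooth inverse in $s$ by the inverse function theorem. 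The main obstacle in the whole argument is the sign analysis of $\partial_r F$ at $r_\text{max}(\vartheta)\in(r_m,3M)$, where the positive $Q_\text{trap}'\sin^2\vartheta$ and the negative $-2\Phi_\text{trap}\Phi_\text{trap}'\cos^2\vartheta$ compete; confirming the overall sign requires explicit use of the polynomial formulas \eqref{teo} and \eqref{teoQ}, and as a secondary concern one must verify the global bound on $|\nu|$ (or, if it fails in some parameter regime, substitute a piecewise bump-function interpolant with a small linear correction).
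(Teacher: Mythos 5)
Your strategy is essentially the paper's: both parts are obtained by defining $r_\text{min}$, $r_\text{max}$ implicitly through the vanishing of the $\Theta$-potential with $Q_\text{trap}$, $\Phi_\text{trap}$ inserted, and then writing down an explicit interpolant $\overline r$. At the first step you are in fact more careful than the paper, which merely asserts that ``smoothness follows from the smooth dependence of $\Theta$ on $r$ and $\vartheta$'' without checking that the relevant zeros are simple; identifying the non-degeneracy $\partial_r F\neq 0$ as the actual content of part 1 is correct. Your interpolant differs from the paper's (which is linear for $s<0$ and of the form $r_m A^{s^2}+(r_m-r_\text{min})s$ for $s\geq 0$ with $A=(r_\text{max}+r_\text{min}-r_m)/r_m$); since that formula is only $C^1$ at $s=0$ unless $A=1$, your globally smooth ansatz is, if anything, in better shape, and your bump-function fallback is the robust way to finish part 2 in any case.

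That said, the one substantive gap is exactly the one you flag: the sign of $\partial_r F$ at $r_\text{max}(\vartheta)\in(r_m,3M)$, where $Q_\text{trap}'\sin^2\vartheta>0$ and $-(\Phi_\text{trap}^2)'\cos^2\vartheta<0$ genuinely compete. Eliminating $C^2$ via $F=0$ (so $C^2=Q_\text{trap}S^2/(\Phi_\text{trap}^2-a^2S^2)$ with positive denominator on the boundary) reduces the claim to $Q_\text{trap}'\,(\Phi_\text{trap}^2-a^2S^2)<(\Phi_\text{trap}^2)'\,Q_\text{trap}$ along the boundary curve, an inequality between two positive quantities; it holds near $r_m$ (the left side is $\mathcal O((r-r_m)^2)$ there while the right side is of order $r-r_m$) and near $3M$ (where $Q_\text{trap}'\to 0$), but in the intermediate range it really does require the explicit polynomials \eqref{teo} and \eqref{teoQ}, as you suspect. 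Until that is done, part 1 is asserted rather than proved --- though, to be fair, the paper's own proof silently skips the same check. The secondary issue, the uniform bound $|\nu|<1$ (i.e.\ $e^{-2}<U/L<e^2$ for all $\vartheta$ and all $a\in(0,M)$), is made plausible by your limiting arguments at the poles and the equator but is not established; switching to the fallback interpolant discharges it cleanly.
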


The function $\overline{r}(\vartheta, \cdot)$ parametrizes the radial width of the crescent moon in Figure~\ref{crescent}. 

\begin{proof}
\begin{enumerate}
 \item 
 The boundary of the region accessible to trapping in the spacetime is exactly where trapped photons have vanishing $\vartheta$-motion ($\dot\vartheta=0$); hence, 
 the wanted functions $r_\text{min}, r_\text{max}$ are given implicitly by the requirements that the right-hand side of the $\Theta$-equation~\eqref{Theta} with $Q=Q_{\text{trap}}(r)$ and $\Phi=\Phi_{\text{trap}}(r)$ plugged in vanishes:
\begin{align*}
0&= Q_{\text{trap}}(r_\text{min}(\vartheta))-\left(\Phi_{\text{trap}}(r_\text{min}(\vartheta))-a^2\right)\cos^2(\vartheta), \\
0&= Q_{\text{trap}}(r_\text{max}(\vartheta))-\left(\Phi_{\text{trap}}(r_\text{max}(\vartheta))-a^2\right)\cos^2(\vartheta)
\end{align*} 

for all $\vartheta\in \left(0, \pi \right)$,
and the additional conditions that $r_\text{min}<r_m<r_\text{max}$ and 
that $r_\text{min}(\vartheta), r_\text{max}(\vartheta)$ are the the zeros of $\Theta (\cdot , \vartheta)=0$ that are closest to $r_m(\vartheta)$ for each $\vartheta$.

Thus, $r_\text{min}$ and $r_\text{max}$ are well-defined by elementary properties of the region accessible to trapping, and smoothness follows from the smooth dependence of $\Theta$ on $r$ and $\vartheta$. 

\item This follows directly from the first part of the lemma, for example by explicitly defining 
\begin{equation*}\overline r (\vartheta, s)\defeq 
\begin{cases} 
\left(r_m(\vartheta )-r_{\text{min}}(\vartheta )\right)s+r_m(\vartheta ) & \text{ for } s<0, \\ 

r_m(\vartheta )\left(\frac{r_{\text{max}}(\vartheta )+r_{\text{min}}(\vartheta )-r_m(\vartheta )}{r_m(\vartheta)}\right)^{s^2} +\left(r_m(\vartheta )-r_{\text{min}}(\vartheta )\right)s & \text{ for } s\geq 0.
\end{cases}
\end{equation*}
\end{enumerate}

\end{proof}

For the following lemma, we fix some notation and interpret the product $\mathbb S^1\times \mathbb S^1$ as follows: let the first factor $\mathbb S^1$ be parametrized by $\varphi\in [- \pi , \pi)$, and the second factor $\mathbb S^1$ 
be viewed as $[-1, 1]\times \{-1, 1\}/\sim_{\mathbb S^1}$, where $\sim_{\mathbb S^1}$ is the equivalence 
relation on $[-1, 1]\times \{-1, 1\}$ generated by identifying $(\pm 1,  1)$ with $(\pm 1, - 1)$. We thus denote elements of $\mathbb S^1\times \mathbb S^1$ in the form $(\varphi, s, \varsigma)$. 

Recall that we write $P_0=P\cap\{t=0, p_0=-1\}$. 

\begin{lem}\label{topoeq}

There are functions $p_2, p_3:  \left(0, \pi \right)\times \mathbb S^1\times \mathbb S^1\rightarrow \mathbb R$ such that the map 
\begin{equation*}
H: \left(0, \pi \right)\times \mathbb S^1\times \mathbb S^1\rightarrow P_0\setminus\{S^2=0\},
\end{equation*}
given in Boyer--Lindquist coordinates in the form 
\begin{equation*}
(\vartheta, \varphi, s, \varsigma)\mapsto H(\vartheta, \varphi, s, \varsigma)=(\overline r(\vartheta, s), \vartheta, \varphi, 0, p_2(\vartheta, \varphi, s, \varsigma), p_3(\vartheta, \varphi, s, \varsigma))
\end{equation*}
is a diffeomorphism, where $\overline r $ is defined as in Lemma \ref{overliner} (and hence only depends on $|\vartheta-\frac\pi 2|$ and $s$). 
\end{lem}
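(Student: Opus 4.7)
The plan is to define $H$ explicitly from the trapping conditions, then verify bijectivity and bi-smoothness. Since $P_0$ fixes the energy via $p_0 = -1$ (equivalently $E=1$), the cotangent translation of the equations of motion combined with the trapping characterization from the proof of Proposition~\ref{off-axis} reduces to $p_1 = 0$, $p_3 = L = \Phi_\text{trap}(r)$, and $p_2^2 = \Theta(\vartheta) = Q_\text{trap}(r) - \(\Phi_\text{trap}(r)^2/S^2 - a^2\)C^2$. At fixed $\vartheta \in (0,\pi)$ the admissible radii fill $[r_\text{min}(\vartheta), r_\text{max}(\vartheta)]$, smoothly parametrized by $s \in [-1,1]$ via $\overline r(\vartheta,\cdot)$ of Lemma~\ref{overliner}. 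This motivates defining
\begin{align*}
p_3(\vartheta, \varphi, s, \varsigma) &\defeq \Phi_\text{trap}\(\overline r(\vartheta, s)\), \\
p_2(\vartheta, \varphi, s, \varsigma) &\defeq \varsigma\sqrt{Q_\text{trap}\(\overline r(\vartheta,s)\) - \(\Phi_\text{trap}\(\overline r(\vartheta,s)\)^2/S^2 - a^2\)C^2}.
\end{align*}

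Bijectivity onto $P_0 \setminus \{S^2 = 0\}$ is then essentially bookkeeping. By construction $H$ lands in $P_0 \setminus \{S^2 = 0\}$. Conversely, any off-axis trapped photon in the slice has a unique $(\vartheta, \varphi)$, a unique $s \in [-1,1]$ with $\overline r(\vartheta, s) = r$ (using the smooth invertibility of $\overline r(\vartheta,\cdot)$), and, away from the boundary $s = \pm 1$, a unique sign $\varsigma = \sgn(p_2)$. At the endpoints the radicand vanishes, forcing $p_2 = 0$ and rendering $\varsigma$ immaterial---which is exactly the identification $(\pm 1, 1) \sim_{\mathbb S^1} (\pm 1, -1)$ defining the second $\mathbb S^1$ factor, so $H$ descends to a bijection.

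The main obstacle is the smoothness of $H$ (and of $H^{-1}$) at $s = \pm 1$, where $p_2$ is defined by a square root of a vanishing quantity. This is resolved by the standard smooth circle structure on $[-1,1] \times \{-1,1\}/\sim_{\mathbb S^1}$: near $s = 1$ one takes the local coordinate $u \defeq \varsigma\sqrt{1-s}$ (and analogously $u \defeq \varsigma\sqrt{1+s}$ near $s = -1$). Two transversality facts then combine. First, $\partial_s \overline r(\vartheta,\cdot)|_{s = \pm 1} \neq 0$, visible from the explicit formula in the proof of Lemma~\ref{overliner}, so $r - r_\text{max/min}(\vartheta)$ vanishes linearly in $1 \mp s$. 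Second, at fixed $\vartheta$ the radii $r_\text{min/max}(\vartheta)$ are simple zeros of $r \mapsto Q_\text{trap}(r) - \(\Phi_\text{trap}(r)^2/S^2 - a^2\)C^2$---this is the transversality already implicit in the smooth dependence $r_\text{min/max}(\vartheta)$ asserted in Lemma~\ref{overliner}. Hence the radicand vanishes linearly in $1\mp s$, so
\[p_2 = u \cdot \sqrt{\text{smooth, strictly positive factor}},\]
which is smooth in $u$. Smoothness of $H^{-1}$ follows symmetrically: on the open locus one inverts $\overline r(\vartheta,\cdot)$ and reads off $\varsigma = \sgn(p_2)$, while near the boundary one recovers $u$ smoothly by dividing $p_2$ by the nonvanishing square root factor and then reading off $(s, \varsigma)$ from $u$.
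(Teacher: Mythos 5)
Your proposal is correct and follows essentially the same route as the paper: define $p_2,p_3$ explicitly from the trapping conditions with $E=1$ and $r=\overline r(\vartheta,s)$, check bijectivity by construction, and get smoothness of $H^{-1}$ from the smooth invertibility of $\overline r(\vartheta,\cdot)$ (the paper writes the contravariant components and then type-changes; you write the covariant ones directly, which is immaterial). Your extra care at $s=\pm1$, where the square root degenerates and the two signs $\varsigma$ are glued, supplies a detail the paper's proof passes over with ``Both $p^2$ and $p^3$ are smooth''; this is a genuine point, and your argument---linear vanishing of the radicand via the two transversality facts, read in the chart $u=\varsigma\sqrt{1\mp s}$---handles it correctly.
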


\begin{proof}

We define functions $p^2, p^3: \left(0, \pi \right)\times \mathbb S^1\times \mathbb S^1\rightarrow \R$ as the solutions of Equations~\eqref{3mid} and~\eqref{3end} with $\overline r(\vartheta, s)$ plugged in for $r$ and $E=1$ plugged in for $e$, 
and with the additional requirement that 
$\sgn p^2 (\vartheta, \varphi, s, \varsigma)  =\varsigma$.
In other words, 
\begin{align*}
 p^2(\vartheta, \varphi, s, \varsigma)&=\varsigma\left(\frac{1}{ \rho^2(\overline r, \vartheta)}\cdot\left(Q_{\text{trap}}(\overline r)-\left(\frac{\Phi^2_{\text{trap}}(\overline r)}{S^2}-a^2\right)C^2\right)\right)^{\frac 1 2 },\\
 p^3 (\vartheta, \varphi, s, \varsigma)& = \frac{1}{\Delta (\overline r)\rho^2(\overline r, \vartheta)}\cdot \left(2M\overline r a+\left(\rho^2(\overline r, \vartheta)-2M\overline r\right)\frac{\Phi_{\text{trap}}(\overline r)}{S^2}\right),
\end{align*}
where we use the shorthand $\overline r =\overline r \left(\vartheta, s \right)$. 

Both $p^2$ and $p^3$ are smooth on $ (0, \pi)\times \mathbb S^1\times \mathbb S^1$, 
and they are of course, as the notation suggests, meant to be used as $7$-th and $8$-th coordinate in $TK$.

Then, a function $p^0$ is defined as the unique positive solution to \begin{equation*}g\left((p^0, 0, p^2, p^3), (p^0, 0, p^2, p^3)\right)=0;\end{equation*} it is smooth by smoothness of the metric $g$. 

The functions $p_2, p_3$ from the present lemma are obtained by type-changing the vector $(p^0, 0, p^2, p^3)\in T_{}K$. 

Bijectivity of $H$ is clear by the construction, and smoothness of the inverse map $H^{-1}$ follows directly from the fact that for every $\vartheta\in(0,\pi)$, the radial width function $\overline r(\vartheta, \cdot)$ has a smooth inverse. 

\end{proof}

\begin{lem}\label{convex}
Let $0<\vartheta_0<\pi$ and $I_{\vartheta_0}\defeq\left(\Phi_{\text{trap}}(r_{\text{min}}(\vartheta_0)), \Phi_{\text{trap}} (r_{\text{max}}(\vartheta_0))\right)$. There is a unique smooth function $\overline p_2: I_{\vartheta_0}\rightarrow \R_{\geq 0}, p_3\mapsto \overline p_2(p_3)$ such that there is an element of $P_0$ with latitude $\vartheta_0$, angular momentum $p_3$, and third covariant Boyer--Lindquist coordinate $p_2=\overline p_2(p_3)$. 

If $\sin^2\vartheta_0$ is small enough, $\overline p_2$ is concave.

\end{lem}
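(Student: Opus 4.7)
The plan is to exhibit $\overline{p}_2$ as a composition of known smooth functions from Section~\ref{intro} and Lemma~\ref{overliner}, and then to read the concavity off a second--derivative computation in which the term $\cos^{2}\vartheta_0/\sin^{2}\vartheta_0$ becomes dominant as $\sin^{2}\vartheta_0\to 0$.

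First I set up $\overline{p}_2$ explicitly. Since $\Phi_{\text{trap}}\colon[\hat r_1,\hat r_2]\to\R$ is smooth and strictly monotonically decreasing (Section~\ref{intro}), it admits a smooth inverse on its image. For a point of $P_0$ (so $E=-p_0=1$) at latitude $\vartheta_0$ with angular momentum $L=p_3$, the trapping condition $\Phi=p_3$ forces the Boyer--Lindquist radius to equal $r(p_3)\defeq\Phi_{\text{trap}}^{-1}(p_3)$ and the Carter constant to equal $Q_{\text{trap}}(r(p_3))$. Substituting this into the $\Theta$--equation~\eqref{Theta} at $\vartheta=\vartheta_0$ and using $p_2=\rho^2\dot\vartheta$ yields
\begin{equation*}
p_2^{\,2}\;=\;F(p_3)\;\defeq\;Q_{\text{trap}}\bigl(r(p_3)\bigr)-\left(\frac{p_3^{\,2}}{\sin^{2}\vartheta_0}-a^{2}\right)\cos^{2}\vartheta_0,
\end{equation*}
which by the defining property of $r_{\text{min}}(\vartheta_0),r_{\text{max}}(\vartheta_0)$ (Lemma~\ref{overliner}) is non--negative precisely on the closure of $I_{\vartheta_0}$, vanishing at its endpoints. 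The non--negative branch $\overline{p}_2\defeq\sqrt{F}$ is then the unique such function, and smooth on $I_{\vartheta_0}$.

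For the concavity claim I would analyze $F$ directly. Writing $\alpha(p_3)\defeq Q_{\text{trap}}(r(p_3))$ and differentiating $\Phi_{\text{trap}}(r(p_3))=p_3$ twice gives $r'(p_3)=1/\Phi_{\text{trap}}'(r)$ and a rational expression for $\alpha''$ in terms of $Q_{\text{trap}}',Q_{\text{trap}}'',\Phi_{\text{trap}}',\Phi_{\text{trap}}''$ evaluated at $r(p_3)$. The remaining $p_3^{\,2}$--term contributes $-2\cos^{2}\vartheta_0/\sin^{2}\vartheta_0$, so
\begin{equation*}
F''(p_3)\;=\;\alpha''(p_3)-\frac{2\cos^{2}\vartheta_0}{\sin^{2}\vartheta_0}.
\end{equation*}
As $\vartheta_0\to 0$ or $\pi$, the interval $I_{\vartheta_0}$ shrinks to $\{0\}$ and $r(p_3)$ stays in an arbitrarily small neighborhood of $r_m$, on which $\alpha''$ is bounded; here I use that $\Phi_{\text{trap}}'(r_m)\neq 0$, which follows from smoothness and strict monotonicity of $\Phi_{\text{trap}}$. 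Hence there exists $\varepsilon>0$ such that for all $\vartheta_0$ with $\sin^{2}\vartheta_0<\varepsilon$ the second term dominates and $F''<0$ throughout $I_{\vartheta_0}$.

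Concavity of $\overline{p}_2$ then follows because $\overline{p}_2=\sqrt{F}$ is the composition of the non--negative concave function $F$ with the concave, non--decreasing map $\sqrt{\cdot}$; equivalently, on $\{F>0\}$ one has $(\sqrt{F})''=F''/(2\sqrt{F})-(F')^{2}/(4F^{3/2})\leq 0$ whenever $F''\leq 0$. I expect the main technical subtlety to lie in verifying that the bound on $\alpha''$ is uniform as $\vartheta_0\to 0,\pi$, which reduces to continuity of $\alpha''$ at $r_m$ together with the non--vanishing of $\Phi_{\text{trap}}'(r_m)$; the blow--up of $\overline{p}_2'$ at the endpoints of $I_{\vartheta_0}$ (where $F$ vanishes) does not affect concavity on the open interval.
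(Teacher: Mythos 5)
Your proof is correct and takes essentially the same route as the paper: both express $(\overline p_2)^2$ via the $\Theta$-equation with $r=\Phi_{\text{trap}}^{-1}(p_3)$, observe that the second derivative splits as a bounded $Q_{\text{trap}}(r(p_3))$-contribution minus $2\cos^2\vartheta_0/\sin^2\vartheta_0$, and let the latter dominate near the poles. Your added justifications (boundedness of $\alpha''$ near $r_m$ via $\Phi_{\text{trap}}'(r_m)\neq 0$, and concavity of $\sqrt{F}$ as a monotone concave composition) only make explicit steps the paper leaves implicit.
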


\begin{proof}
Recall that the covariant Boyer--Lindquist coordinate $p_3$ coincides with the scaled angular momentum $\Phi=\Phi_{\text{trap}}$ for every photon in $P_0$. 
Since $\Phi_{\text{trap}}$ is strictly decreasing in $r$ on the photon region, we may regard $r$ as a function of $p_3$ and write $r(p_3)$ for the solution of $p_3=\Phi_{\text{trap}}(r)$. 

Similarly to the treatment of the functions $p^2, p^3$ in the proof of Lemma~\ref{topoeq}, we use Equation~\eqref{Thetascale} to see that $\overline p_2>0$ is given by
\begin{equation*}\left(\overline p_2\right)^2=
Q_{\text{trap}}(r(p_3))-\left((p_3)^2-a^2\right)\frac{C^2}{S^2}-a^2\frac{C^4}{S^2}.
\end{equation*}

Therefore, 

\begin{equation*}\frac{d^2}{d(p_3)^2}\left(\overline p_2(r)\right)^2=
\frac{d^2}{d (p_3)^2}Q_{\text{trap}}(r(p_3))-2\frac{C^2}{S^2}.
\end{equation*}

Since $\frac{d^2}{d (p_3)^2}Q_{\text{trap}}(p_3)$ is bounded in a neighborhood of the photon region and $\frac{C^2}{S^2}$ approaches infinity at the poles, $\frac{d^2}{d(p_3)^2}\left(\overline p_2(r)\right)^2$ is negative in a punctured neighborhood of any pole. 
Hence, $(\overline p_2)^2$ and also $\overline p_2$ are concave if $\sin^2\vartheta_0$ is chosen sufficiently small. 
\end{proof}

\begin{lem}\label{toponorth}
There is a $0<\varepsilon<1$ such that the neighborhood $P_0\cap  \{\sin^2\vartheta<\varepsilon\}$ of the North and South Pole in of the photon region in the phase space slice $\{t=0, p_0=-1\}$ of a subcritical Kerr spacetime is 
is diffeomorphic to  $\left(\mathbb S^2\cap \{\sin^2\vartheta<\varepsilon \}\right)\times \mathbb S^1$ via 
\begin{equation*}\Psi: P_0\cap  \{\sin^2\vartheta<\varepsilon\} \rightarrow \left(\mathbb S^2\cap \{\sin^2\vartheta<\varepsilon \}\right)\times \mathbb S^1,
\end{equation*}
 where $\Psi$ is given in the coordinates of the proof of Proposition~\ref{off-axis}
by 
\begin{equation*} \Psi ( 0, r, \vartheta, \varphi, -1, 0, \widetilde p_2, \widetilde p_3)\defeq \left(\vartheta, \varphi,[\operatorname{atan2} (\widetilde p_2, \widetilde p_3)]\right).
\end{equation*}
 (A definition of $\operatorname{atan2}$ is given in the proof below.)

\end{lem}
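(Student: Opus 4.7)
The idea is that $\Psi$ is fiber-preserving with respect to the base projection $(\vartheta,\varphi)$, so it suffices to show that its restriction to each fiber is a diffeomorphism onto $\mathbb{S}^1$. I would fix the convention that $\operatorname{atan2}\colon \mathbb{R}^2\setminus\{(0,0)\}\to \mathbb{S}^1$ denotes the standard smooth angle map, namely the one sending $(v,u)$ to the unique element of $\mathbb{R}/2\pi\mathbb{Z}$ representing the direction of the vector $(u,v)$; the bracket $[\,\cdot\,]$ in the statement emphasizes passage to this quotient.

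Fix first a non-polar base point $(\vartheta_0,\varphi_0)$ with $0<\sin^2\vartheta_0<\varepsilon$, where $\varepsilon$ is small enough that Lemma~\ref{convex} applies uniformly. By Equations~\eqref{3}--\eqref{3end} with $E=1$, together with the relation $p_3=\Phi_{\mathrm{trap}}(r)$ used in Lemma~\ref{convex}, the fiber of $P_0$ over $(\vartheta_0,\varphi_0)$ is described in covariant Boyer--Lindquist coordinates by $\{p_1=0,\ |p_2|=\overline p_2(p_3),\ p_3\in I_{\vartheta_0}\}$, with $r$ recovered from $p_3$. Since $\overline p_2$ is smooth, strictly concave, positive on the interior of $I_{\vartheta_0}$, and vanishes at its endpoints (where $\dot\vartheta=0$), the region $\{|p_2|\le \overline p_2(p_3)\}$ is strictly convex, and its boundary is a smooth Jordan curve; moreover $0\in I_{\vartheta_0}$ with $\overline p_2(0)>0$ (as $r_m\in(r_{\min}(\vartheta_0),r_{\max}(\vartheta_0))$ and $Q_{\mathrm{trap}}(r_m)>0$), so the origin lies in the interior of the curve. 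The linear coordinate change $(p_2,p_3)\leftrightarrow(\widetilde p_2,\widetilde p_3)$ induced by the smooth base change $(\vartheta,\varphi)\leftrightarrow(x,y)$ preserves both convexity and origin-enclosure, so the same holds in the $(\widetilde p_2,\widetilde p_3)$-plane.

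Any ray from the origin meets a convex Jordan curve enclosing the origin at exactly one point, transversally, so $\operatorname{atan2}$ restricted to the fiber is a diffeomorphism onto $\mathbb{S}^1$. Combined with the smooth dependence of $\overline r,\overline p_2,\Phi_{\mathrm{trap}},Q_{\mathrm{trap}}$, and of the coordinate change, on $(\vartheta_0,\varphi_0)$, this shows that $\Psi$ is a smooth fiberwise diffeomorphism over the non-polar part of the cap; the implicit function theorem applied to $\operatorname{atan2}(\widetilde p_2,\widetilde p_3)=\theta$ along each fiber then delivers smoothness of $\Psi^{-1}$.

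The main obstacle is the polar point itself, where the Boyer--Lindquist coordinates $(\vartheta,\varphi)$ and hence the description of the fiber via Lemma~\ref{convex} break down. To handle it I would switch to the $(x,y,\widetilde p)$-chart of Proposition~\ref{on-axis}: a trapped photon on the axis must satisfy $\Phi_{\mathrm{trap}}(r)=0$, which forces $r=r_m$, and the remaining trapping condition $Q=Q_{\mathrm{trap}}(r_m)$, rewritten in the $(\widetilde p_\alpha)$-coordinates computed in that proof, reduces to $\widetilde p_2^{\,2}+\widetilde p_3^{\,2}=c$ for some positive constant $c=c(M,a,r_m)$. Thus the fiber over the pole is a round circle centered at the origin in the $(\widetilde p_2,\widetilde p_3)$-plane, on which $\operatorname{atan2}$ is again a diffeomorphism onto $\mathbb{S}^1$; smoothness of $\Psi$ and $\Psi^{-1}$ across the pole (after possibly shrinking $\varepsilon$) then follows from the submanifold structure of $P$ established in Proposition~\ref{on-axis}.
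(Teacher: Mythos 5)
Your proposal is correct and follows essentially the same route as the paper: off the pole, the fiber over a fixed $(\vartheta_0,\varphi_0)$ is recognized (via Lemma~\ref{convex}) as a convex curve enclosing the origin, which a linear fiberwise coordinate change carries to the $(\widetilde p_2,\widetilde p_3)$-plane where $\operatorname{atan2}$ maps it bijectively to $\mathbb S^1$; at the pole the fiber is a round circle by axial symmetry; and smoothness of the inverse comes from an inverse/implicit function theorem argument. Your treatment is even slightly more explicit than the paper's on why the origin is enclosed ($r_m$ lies strictly between $r_{\min}$ and $r_{\max}$ and $Q_{\text{trap}}(r_m)>0$) and on why the polar fiber is a circle ($\Phi_{\text{trap}}=0$ forces $r=r_m$).
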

\begin{proof}

We now view the sphere $\mathbb S^1$ as the quotient of the real line that is obtained by factoring out the equivalence relation generated 
by $u\sim u+2\pi$ and denote its elements by $[u]\defeq \{u+2\pi k: k\in \mathbb Z\}$. 

Note that using $\left(\vartheta, \varphi \right)$ also on the poles causes no problem, even though they are not coordinate functions at the poles.

 As usual, $\operatorname{atan2}: \R^2\setminus\{(0,0)\}\rightarrow \left(-\pi ,  \pi \right]$ gives the angle between the positive $\widetilde p_2$-axis and $(\widetilde p_2, \widetilde p_3)$ (keeping track of its sign) and  is defined by 
 
 $\operatorname{atan2}(v, u)=
  \begin{cases} 
\arctan \left(\frac v u \right) & \text{ for } u> 0, \\ 
\arctan \left(\frac v u \right)+\sgn v \cdot \pi & \text{ for } u< 0 , \\
\sgn v\cdot \frac \pi 2 &\text{ for } u=0.
\end{cases}$

Let $\varepsilon >0$ be such that the function $\overline p_2 (\vartheta, \cdot)$ from Lemma~\ref{convex} is concave for every $\vartheta$ with $\sin^2\vartheta<\varepsilon$. 

 The map $\Psi$ as given in the claim is well-defined and smooth. 
  We write $\Psi_\text{North}$ and $\Psi_\text{South}$ for the restrictions of $\Psi$ to the Northern and Southern pole caps $P_0\cap  \{\sin^2\vartheta<\varepsilon, \vartheta< (>) \frac\pi 2\}$.
 Since the union  of the pole caps is disjoint, 
 it suffices to show that $\Psi_\text{North}$ and $\Psi_\text{South}$ are diffeomorphisms. 

Focusing on $\Psi_\text{North}$, we prove that $\Psi_\text{North}$ is a bijection by arguing that for fixed $ (\vartheta_0, \varphi_0)$, the map 
\begin{equation*}
\Psi_\text{North}( 0, r, \vartheta_0, \varphi_0, -1, 0, \cdot, \cdot): P_0\cap\{ \vartheta =\vartheta_0, \varphi = \varphi_0,
\}\rightarrow\{ \vartheta_0,\varphi_0\}\times \mathbb S^1
\end{equation*}
 is bijective: 
  
 First we fix a $( \vartheta_0, \varphi_0)\in \mathbb S^2$ with $\vartheta_0< \pi $, $\sin^2\vartheta_0<\varepsilon$. 
   
We need to show that $\left[\operatorname{atan2}\right]$ maps the set
\begin{equation*}
\{(\widetilde p_2, \widetilde p_3)\in\R^2: \exists\; r \text{ such that } (0,  r, \vartheta_0, \varphi_0,  -1, 0, \widetilde p_2, \widetilde p_3)\text{ is a trapped photon}\}
\end{equation*}
 bijectively to $\mathbb S^1$. To this end, we change from the covariant coordinates $\widetilde  p_2, \widetilde p_3$ to the ones that belong to Boyer--Lindquist coordinates, $p_2$ and $p_3$. 
The set 
\begin{equation*}\{(p_2, p_3)\in\R^2: \exists\; r \text{ such that } (0,  r, \vartheta_0, \varphi_0,  -1, 0, p_2, p_3)\text{ is a trapped photon}\}
\end{equation*}
bounds a convex region around the origin in $\R^2$, since it can be piecewise parametrized by $p_3\mapsto \left(\pm \overline p_2(p_3), p_3\right)$, where $\overline p_2$ is the concave function from Lemma~\ref{convex}. Its image under the linear bijection 
\begin{equation*}\R^2\ni(p_2, p_3)\mapsto \left(\tan \vartheta_0\cdot (\cos\varphi_0 p_2-\sin\varphi_0 p_3), -\sin\varphi_0 p_2+\cos\varphi_0 p_3\right)\in \mathbb R^2\end{equation*} 
is also a convex set around the origin, which means that $\left[\operatorname{atan2}\right]$ maps it bijectively to $\mathbb S^1$. 

On the other hand, one can easily check that
	\begin{equation*}\operatorname{atan2} (\widetilde p_2, \widetilde p_3) =\operatorname{atan2}\left(\tan \vartheta_0\cdot (\cos\varphi_0 p_2-\sin\varphi_0 p_3), -\sin\varphi_0 p_2+\cos\varphi_0 p_3\right),\end{equation*}
 since $\frac{\widetilde p_2}{\widetilde p_3}$ can be rewritten in terms of $p_2$ and $p_3$ as 
  \begin{equation*}\frac{\widetilde p_2}{\widetilde p_3}=\tan \vartheta_0 \cdot \frac {\cos\varphi_0 p_2-\sin\varphi_0 p_3}{-\sin\varphi_0 p_2+\cos\varphi_0 p_3}\end{equation*}
   if $\widetilde p_3\neq 0$, and $\sgn \widetilde p_2 =\sgn \left(\cos\varphi_0 p_2-\sin\varphi_0 p_3\right)$.
   
This proves that  $\Psi_\text{North}( 0, r, \vartheta_0, \varphi_0, -1, 0, \cdot,\cdot): P_0\cap\{\vartheta=\vartheta_0,\varphi=\varphi_0 \}\rightarrow\{ \vartheta_0, \varphi_0\}\times \mathbb S^1$ is bijective for every 
   $( \vartheta_0, \varphi_0)\in \mathbb S^2$ with $\vartheta_0< \pi$, $\sin^2\vartheta_0<\varepsilon$. 
   
Now we show that $\Psi_\text{North}|_{\{S^2=0\}}$ is a bijection onto ${\{S^2=0\}}\times \mathbb S^1$:
  
  At the North Pole ${\{S^2=0\}}$ of the region accessible to trapping, 
  every lightlike point in the phase space with $\widetilde p_1=0$ is a trapped photon, and it is obvious by the rotational symmetry of the Kerr spacetime that the set 
  $\{(\widetilde p_2, \widetilde p_3) : (-1, 0, \widetilde p_2, \widetilde p_3) \text{ is a photon}\}$ is a circle around the origin in the $\widetilde p_2$-$\widetilde p_3$-plane. As before, it is mapped bijectively under $[ \operatorname{atan2}]$ onto $\mathbb S^1$. 
 
We have now seen that $\Psi_\text{North}$ is bijective. A straightforward calculation yields that the differential of $\Psi_\text{North}$ has full rank everywhere, so that by the implicit function theorem $\Psi_\text{North}$ is a diffeomorphism, which proves the claim. 
\end{proof}

In order to determine the topology of $P_0$ (and thereby the topology of $P$), we will first use the maps $H$ and $\Psi$ that were constructed in the Lemmata~\ref{topoeq} and~\ref{toponorth} to conclude: 
\newpage

\begin{prop} The Kerr photon region slice $P_0$ has first fundamental group $\mathbb Z_2$. \label{funda}
\end{prop}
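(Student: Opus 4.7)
The plan is to apply the Seifert--van Kampen theorem to the open cover $P_0 = U \cup V_N \cup V_S$, where $U \defeq P_0 \setminus \{S^2 = 0\}$ is the off-axis portion parametrized in Lemma~\ref{topoeq} and $V_N, V_S$ are the polar cap neighborhoods furnished by Lemma~\ref{toponorth}. Since $V_N \cap V_S = \varnothing$, I would proceed in two stages, first computing $\pi_1(U \cup V_N)$ and then attaching $V_S$.

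By Lemma~\ref{topoeq}, the diffeomorphism $H$ identifies $U$ with $(0, \pi) \times \mathbb{S}^1 \times \mathbb{S}^1$, so $\pi_1(U) \cong \mathbb{Z}^2$ is freely generated by $a$ (the $\varphi$-loop at fixed $(\vartheta, s, \varsigma)$) and $b$ (the $(s, \varsigma)$-loop at fixed $(\vartheta, \varphi)$, which once traverses the trapped-photon momentum circle counterclockwise in the covariant $(p_2, p_3)$-plane). By Lemma~\ref{toponorth}, $V_N \cong V_S \cong D^2 \times \mathbb{S}^1$, so $\pi_1(V_N) = \mathbb{Z}\langle \gamma_N \rangle$ and $\pi_1(V_S) = \mathbb{Z}\langle \gamma_S \rangle$ are each generated by the corresponding $[\operatorname{atan2}(\widetilde p_2, \widetilde p_3)]$-loop. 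The intersections $U \cap V_{N/S}$ are connected with fundamental group $\mathbb{Z}^2$, for which $a$ and $b$ serve as generators.

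The crux of the argument is to compute how $a$ and $b$ map into $\pi_1(V_N)$ and $\pi_1(V_S)$. The relation between the two parametrizations is the linear map $(p_2, p_3) \mapsto (\widetilde p_2, \widetilde p_3)$ induced by the Jacobian of $(\vartheta, \varphi) \mapsto (x, y)$, which I would factor as a $\varphi$-rotation times a diagonal scaling whose determinant carries the sign of $\cos\vartheta$. Two facts then follow: letting $\varphi$ traverse $[0, 2\pi]$ with $(p_2, p_3)$ fixed rotates $(\widetilde p_2, \widetilde p_3)$ counterclockwise by $2\pi$ in both hemispheres, so $a \mapsto \gamma_N$ in $V_N$ and $a \mapsto \gamma_S$ in $V_S$; and at fixed $\varphi$ the change of coordinates is orientation-preserving in the Northern hemisphere but orientation-reversing in the Southern hemisphere, so the fiber loop $b$ maps to $+\gamma_N$ in $V_N$ but to $-\gamma_S$ in $V_S$.

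The main obstacle is precisely this orientation bookkeeping near the coordinate singularity on the axis, as the sign flip of $\cos\vartheta$ across the equator is what will ultimately force the $\mathbb{Z}_2$-torsion; without it, the two pole caps would contribute the same relation and one would only obtain $\mathbb{Z}$. Feeding the four inclusion-induced relations into Seifert--van Kampen produces the presentation
\begin{equation*}
\pi_1(P_0) = \langle a,\, b,\, \gamma_N,\, \gamma_S \mid a = \gamma_N,\ b = \gamma_N,\ a = \gamma_S,\ b = -\gamma_S \rangle.
\end{equation*}
Eliminating $a$, $\gamma_N$, and $\gamma_S$ leaves the single relation $2b = 0$, so $\pi_1(P_0) \cong \mathbb{Z}_2$, as required.
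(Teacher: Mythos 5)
Your proposal is correct and follows essentially the same route as the paper's proof: the same three-set cover (the off-axis torus bundle of Lemma~\ref{topoeq} plus the two pole caps of Lemma~\ref{toponorth}), the same two-stage application of Seifert--van Kampen forced by $V_N\cap V_S=\varnothing$, and the same key computation of how the $\varphi$-loop and the fiber loop map into the pole-cap circles via the Jacobian of $(\vartheta,\varphi)\mapsto(x,y)$. One remark: your orientation bookkeeping (the $\varphi$-loop winding the same way in both hemispheres while the fiber loop flips sign across the equator, because only the diagonal scaling --- not the rotation factor --- carries the sign of $\cos\vartheta$) is the internally consistent version of the paper's argument; the paper's intermediate ``Facts 1 and 2'' as literally stated have both generators flipping, which by itself would yield $\mathbb Z$, but its final list of van Kampen relations matches yours and gives $\mathbb Z_2$.
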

\begin{proof}
The diffeomorphisms $H$ from Lemma~\ref{topoeq} and  $\Psi$ from Lemma~\ref{toponorth} will be used in what follows. 
Recall (from the proof of Lemma~\ref{on-axis}) the covariant coordinates $\widetilde p_i$ that are naturally associated with the coordinates $(t,r, x=r \cos\varphi\sin\vartheta, y=r \sin\varphi\sin\vartheta)$. 

 While $U_\text{North}\approx B_1(0)\times \mathbb S^1$ and $U_\text{South}\approx B_1(0)\times \mathbb S^1$ both have $\mathbb Z$ as their first fundamental group, $U_\text{Eq}\approx \left(-\pi, \pi \right)\times \mathbb S^1\times \mathbb S^1$ has first fundamental group $\mathbb Z\times \mathbb Z$. 
 Moreover, $\pi_1\left(U_\text{Eq}\cap U_\text{North}\right)=\pi_1\left(U_\text{Eq}\cap U_\text{South}\right)=\mathbb Z\times \mathbb Z$. 

Let $\frac\pi 2<\vartheta_0<\pi$ be such that every point of $P_0$ with latitude $\vartheta_0$ is in the domain of $\Psi_{\text{North}}$. We define homotopies of closed paths $\gamma_{1}, \gamma_{2}: \mathbb S^1\times I\rightarrow P_0$ by 

\begin{align*}
\gamma_{1,\lambda} (\varphi)&\defeq H\left(\vartheta_0 -\lambda(2\vartheta_0-\pi), \varphi , 0,-1\right), \\
\gamma_{2, \lambda} (s, \varsigma)&\defeq H\left( \vartheta_0 -\lambda(2\vartheta_0-\pi), 0, s, \varsigma\right).
\end{align*}

By construction, the paths $\gamma_{1,0}$ and $\gamma_{2,0}$ are representatives of a set of generators for 
$\pi_1\left(U_\text{Eq}\cap U_\text{North}\right)$, and the analogous statement holds for $\gamma_{1,1}$ and $\gamma_{2,1}$ in $\pi_1\left(U_\text{Eq}\cap U_\text{South}\right)$, as well as for  $\gamma_{1,\lambda}$ and $\gamma_{2,\lambda}$ in $\pi_1\left(U_\text{Eq}\right)$
(for every $\lambda$). 

We will now determine what elements in $\pi_1(U_{\text{North}})$ the paths $\gamma_{i,0}$ represent and what elements in  $\pi_1(U_{\text{South}})$ the paths $\gamma_{i,1}$ represent (for $i=1,2$). 

Note that by construction of $H$, the image of each $\gamma_{1,\lambda}$ consist of points with $r=r_m$ and hence with $p_3=0$. 

This makes it easy to calculate---using the standard transformations for the coordinates of the phase part of $TK^*$---that for points in the image of a path $\gamma_{1,\lambda}$ with $\lambda\neq \frac 1 2$: 
\begin{align}
\widetilde p_2\circ{\gamma_{1,\lambda}} (\varphi)&=\frac{p_2}{r\cos \vartheta}\cos \varphi \label{pzwo}\\
\widetilde p_3\circ{\gamma_{1,\lambda}} (\varphi)&=-\frac{p_2}{r\sin\vartheta}\sin\varphi,\label{pzwozwo} 
\end{align}
where we use $\vartheta$ as shorthand for $\vartheta(\lambda)=\vartheta_0 -\lambda(2\vartheta_0-\pi)$ and $p_2$ as shorthand for $p_2\circ{\gamma_{1,\lambda}}(\varphi)$.

Clearly, $|p_2|\circ{\gamma_{1,\lambda}} (\varphi)$ can be calculated from $p_3\circ{\gamma_{1,\lambda}}=0$, $r\circ{\gamma_{1,\lambda}}=r_m$, and the modulus of the latitude, $|\vartheta-\frac\pi 2|$. 
Since we have chosen negative sign of $p_2$ for all points in the range of $\gamma_{1,\lambda}$, even
$p_2\circ{\gamma_{1,\lambda}} (\varphi)$ only depends on $|\vartheta-\frac\pi 2|$.

This allows us to simplify \eqref{pzwo}--\eqref{pzwozwo} to 
\begin{align*}
\widetilde p_2\circ{\gamma_{1,0}} (\varphi)&=A\cos \varphi,\\
\widetilde p_2\circ{\gamma_{1,1}} (\varphi)&=-A\cos\varphi,\\
\widetilde p_3\circ{\gamma_{1,\lambda}}(\varphi) &=B\sin\varphi  \hspace{.3cm}\text{for } \lambda =0,1\\
\end{align*}
for positive constants $A, B$. 

We see from these formulas that the map $P_3\circ\Psi  \circ\gamma_{1,0}: \mathbb S^1\rightarrow \mathbb S^1$ (where $P_3$ is the projection onto the third component), 
given by $[\operatorname{atan2}(\widetilde p_2\circ{\gamma_{1,0}}, \widetilde p_3\circ{\gamma_{1,0}})]$, has index~$1$ and is hence homotope to the identity map on $\mathbb S^1$. 

Similarly, $P_3\circ\Psi  \circ\gamma_{1,1}: \mathbb S^1\rightarrow \mathbb S^1$ is of index~$-1$ and can be thought of as the map $\operatorname{-id}: \mathbb S^1\rightarrow \mathbb S^1$.

We may thus note for later use \textbf{Fact 1}: 
the path $\Psi\circ\gamma_{1,0}$ is equivalent to $\varphi\mapsto (\vartheta_0, 0, \varphi)$ in $\pi_1(\Psi(U_{\text{North}}\cap U_\text{Eq}))$, and the path $\Psi\circ\gamma_{1,1}$ 
is equivalent to $\varphi\mapsto (\pi-\vartheta_0, 0, -\varphi)$ in $\pi_1(\Psi(U_{\text{South}}\cap U_\text{Eq}))$.

Similarly as we just did for $\gamma_{1,\lambda}$, we now calculate the components $\widetilde p_2$ and $\widetilde p_2$ on the orbits of $\gamma_{2,\lambda}$ for $\lambda\neq \frac 1 2 $: 
\begin{align*}
\widetilde p_2\circ{\gamma_{2,\lambda}}(s, \varsigma) &=\frac{1}{r\cos \vartheta} p_2\\
\widetilde p_3\circ{\gamma_{2,\lambda}}(s, \varsigma) &=\frac{1}{r\sin\vartheta}p_3. 
\end{align*}
Here, $p_2$ and $p_3$ depend on $\vartheta(\lambda)= \vartheta_0 -\lambda(2\vartheta_0-\pi)$ and $(s, \varsigma)$. 
Actually, $p_2$ and $p_3$ are independent of the sign of $\vartheta-\frac \pi 2$, since the function $\overline r$ used in the construction of $H$ is. 

This allows us to write \begin{align}
\widetilde p_2\circ{\gamma_{2,0}} (s, \varsigma)&=- \widetilde A p_2,\label{pezwo}\\
\widetilde p_2\circ{\gamma_{2,1}} (s, \varsigma)&=\widetilde A p_2,\\
\widetilde p_3\circ{\gamma_{2,\lambda}}(s, \varsigma) &=\widetilde B p_3  \hspace{.3cm}\text{for } \lambda =0,1 \label{pezwozwo} 
\end{align}
for positive constants $\widetilde A, \widetilde B$. 

We already know that $\gamma_{2,0}$ represents one of the generators of 
$\pi_1\left(U_\text{Eq}\cap U_\text{North}\right)$, and that $\gamma_{2,1}$ represents one of the generators of
$\pi_1\left(U_\text{Eq}\cap U_\text{South}\right)$. 
Choosing an orientation for $\mathbb S^1\equiv [-1, 1]\times \{-1, 1\}/\sim_{\mathbb S^1}$, we can decide that $P_3\circ\Psi\circ\gamma_{2,i}: \mathbb S^1\rightarrow \mathbb S^1$ has index~$1$. 
Then $P_3\circ\Psi\circ\gamma_{2,i}: \mathbb S^1\rightarrow \mathbb S^1$ has index~$-1$.
We can now state \textbf{Fact 2}: 
the path $\Psi\circ\gamma_{2,0}$ is equivalent to $\varphi\mapsto (\vartheta_0, 0, \varphi)$ in $\pi_1(\Psi(U_{\text{North}}\cap U_\text{Eq}))$, 
and the path $\Psi\circ\gamma_{1,1}$ is equivalent to $\varphi\mapsto (\pi-\vartheta_0, 0, -\varphi)$ in $\pi_1(\Psi(U_{\text{South}}\cap U_\text{Eq}))$. 

In the last step of the proof, we finally calculate $\pi_1(P_0)$ using the Seifert--van Kampen theorem. 
Combining Fact 1 and Fact 2, we note that in ${\pi_1\left(U_\text{North}\cap U_\text{Eq}\right)}$, 
\begin{equation*}\left[\gamma_{1,0}\right]_{\pi_1\left(U_\text{North}\cap U_\text{Eq}\right)}=\left[\gamma_{2,0}\right]_{\pi_1\left(U_\text{North}\cap U_\text{Eq}\right)}, \end{equation*}

and in ${\pi_1\left(U_\text{South}\cap U_\text{Eq}\right)}$, that

\begin{equation*}\left[\gamma_{1,1}\right]_{\pi_1\left(U_\text{South}\cap U_\text{Eq}\right)}=\left[\gamma_{2,1}\right]_{\pi_1\left(U_\text{South}\cap U_\text{Eq}\right)}. 
\end{equation*}

We apply the Seifert--van Kampen theorem two times; first to $U_\text{North}$ and $U_\text{Eq}$, then to $U_\text{North}\cup U_\text{Eq}$ and $U_\text{South}$). 

For the sake of readability, the homomorphisms of the different fundamental groups induced by the various inclusion maps are notationally omitted. 

Using the group presentations 
\begin{equation*}
 \pi_1\left(U_\text{North}\right)=\langle\left[\gamma_{1,0}\right]_{\pi_1\left(U_\text{North}\right)} , \left[\gamma_{2,0}\right]_{\pi_1\left(U_\text{North}\right)}\rangle
 \end{equation*}
and 
\begin{equation*}
\pi_1\left(U_\text{Eq}\right)=\langle\left[\gamma_{1,0}\right]_{\pi_1\left(U_\text{Eq}\right)} , \left[\gamma_{2,0}\right]_{\pi_1\left(U_\text{Eq}\right)}\rangle,
\end{equation*}

one obtains $\pi_1\left(U_\text{Eq}\cup U_\text{North}\right)$ by factoring the identifications

\begin{align*}
\left[\gamma_{1,0}\right]_{\pi_1\left(U_\text{North}\right)}&=\left[\gamma_{2,0}\right]_{\pi_1\left(U_\text{Eq}\right)}, \\
\left[\gamma_{1,0}\right]_{\pi_1\left(U_\text{North}\right)}&=\left[\gamma_{1,0}\right]_{\pi_1\left(U_\text{Eq}\right)}
\end{align*}
out of the group 
\begin{equation*}
\langle\left[\gamma_{1,0}\right]_{\pi_1\left(U_\text{North}\right)} , \left[\gamma_{2,0}\right]_{\pi_1\left(U_\text{North}\right)}, \left[\gamma_{1,0}\right]_{\pi_1\left(U_\text{Eq}\right)}, \left[\gamma_{2,0}\right]_{\pi_1\left(U_\text{Eq}\right)} \rangle.
\end{equation*}

Now,  $\pi_1(P_0)=\pi_1\left(\left(U_\text{Eq}\cup U_\text{North}\right)\cup U_\text{South} \right)$ can be calculated by factoring the identifications 
\begin{align*}
 \left[\gamma_{1,1}\right]_{\pi_1\left(U_\text{South}\right)}&=\left[\gamma_{2,0}\right]_{\pi_1\left(U_\text{Eq}\right)}, \\
 \left[\gamma_{1,1}\right]_{\pi_1\left(U_\text{South}\right)}&=\left[\gamma_{1,0}\right]_{\pi_1\left(U_\text{Eq}\right)}, 
 \end{align*}
 (which come from Facts 1 and 2) out of the free product of $\pi_1\left(U_\text{Eq}\cup U_\text{North}\right)$ with $\pi_1\left(U_\text{South} \right)$.
 
Hence, $\pi_1(P_0)$ is the quotient of $\langle \left[\gamma_{1,0}\right]_{\pi_1\left(U_\text{Eq}\right)}, \left[\gamma_{2,0}\right]_{\pi_1\left(U_\text{Eq}\right)}, 
 \left[\gamma_{1,0}\right]_{\pi_1\left(U_\text{North}\right)}, \left[\gamma_{1,1}\right]_{\pi_1\left(U_\text{South}\right)}\rangle $
 after factoring out the identifications
\begin{align*}
\left[\gamma_{1,0}\right]_{\pi_1\left(U_\text{North}\right)}&=\left[\gamma_{2,0}\right]^{-1}_{\pi_1\left(U_\text{Eq}\right)}, \\
\left[\gamma_{1,0}\right]_{\pi_1\left(U_\text{North}\right)}&=\left[\gamma_{1,0}\right]_{\pi_1\left(U_\text{Eq}\right)},\\
  \left[\gamma_{1,1}\right]_{\pi_1\left(U_\text{South}\right)}&=\left[\gamma_{2,0}\right]_{\pi_1\left(U_\text{Eq}\right)}, \\
 \left[\gamma_{1,1}\right]_{\pi_1\left(U_\text{South}\right)}&=\left[\gamma_{1,0}\right]_{\pi_1\left(U_\text{Eq}\right)}.
\end{align*}
The quotient we are left with is the group $\mathbb Z_2$. 
\end{proof}

We are now going to conclude in a very short argument that the photon region in the phase space of a subcritical Kerr spacetime has topology $SO(3)\times \mathbb R^2$:

\begin{proofof}{Theorem~\ref{topo}} Since $P_0$ is a closed $3$-dimensional manifold with $\pi_1(P_0)=\mathbb Z_2$, it is doubly covered by an $\mathbb S^3$ (by the Poincar\'e conjecture). 
By the elliptization conjecture, this $\mathbb S^3$ can be taken to be the standard $3$-sphere and the group $\mathbb Z_2$ as a subgroup of $SO(3)$ acting on it. (For the statements of the Poincar\'e and the elliptization conjecture, see e.g.~\cite{thurston, scott}; for 
the proofs covering these conjectures see~\cite{perel1, perel2, perel3}.) Hence, $P_0$ is the quotient $\mathbb S^3 /\mathbb Z_2 \approx \mathbb R P^3\approx SO(3)$. 

Recalling how $P_0$ was obtained as a slice $P\cap \{t=0, p_0=-1\}$ of the photon region in the phase space, this proves Theorem~\ref{topo}. 

\end{proofof}

\section{Outlook}

We have described how the phenomenon of trapping of light in subcritical Kerr spacetimes can be better understood in the framework of the cotangent bundle and have characterized the set of (future) trapped photons as a submanifold of $T^*K$ with topology $SO(3)\times\R^2$. 
It is natural to ask if analogous results can be obtained with similar methods for other stationary spacetimes, possibly involving a cosmological constant. Note that in~\cite{dyatlov}, the implicit function argument for the computation of the topology of the photon sphere in the phase space is also applied to the Kerr--de Sitter spacetime.
 
Null geodesics of constant coordinate radius in (various subfamilies of) the Pleba\'{n}ski--Demia\'{n}ski class of metrics have been studied in~\cite{gpl14, gplaccel}. To our knowledge, it has not yet been rigorously investigated whether these photon orbits are the only trapped photons in the respective spacetimes, which would be a crucial first step for a geometric understanding of the photon region. 
Results for spherical photons in the Pleba\'{n}ski--Demia\'{n}ski spacetime family show a very similar spacetime picture of trapping as in the Kerr family (\cite{gpl14, gplaccel}); one might also hope to prove in this setting that the photon region in the 
phase space is a submanifold with topology $ SO(3)\times \R^2$, but of course, the calculations would become much more involved in this generalized setting. 

We can also ask similar questions about the photon region in stationary spacetimes of higher dimensions. By the symmetry of a Schwarzschild--Tangherlini black hole~\cite{Tangherlini} of dimension $n+1$ and mass $M$, it is known that 
trapping of light occurs at a fixed radius $r=nM$ for every photon that is initially tangent to the hypersurface $\{r=nM\}$. Hence (by arguments just like in the $4$-dimensional Schwarzschild case), the photon region in the phase space can be expected to be a
submanifold with the topology of a tangent unit sphere bundle $ T^1\mathbb S^{n-1}$, times $\mathbb R^2$. 

It seems reasonable to conjecture that for Myers--Perry spacetimes of dimenion $n+1$, the photon region in the phase space has the same topology and bundle structure as the one of the Schwarzschild--Tangherlini solution of dimension $n+1$ (
 whether the various rotation parameters coincide or not), since letting the rotational parameters tend to zero should not cause jumps in the topology, just as in the $3+1$-dimensional Kerr case. 
This can, however, not be proved by similar methods as the ones we used for the $4$-dimensional case, since our proof relies on the classification of $3$-manifolds via their fundamental groups, 
and there is no similar classification available for higher dimenions. 

The presented way to determine the topology of the Kerr photon region in phase space might turn out to be useful in proving a uniqueness theorem for asymptotically flat, stationary, vacuum spacetimes with a photon region inner boundary, in the spirit of the static results in this direction discussed in the introduction. 
 
\section{Acknowledgements}

We would like to thank Pieter Blue and Andr\'as Vasy for useful comments, as well as Greg Galloway for suggesting a simplification of the concluding argument. 
Furthermore, we thank Oliver Sch\"on for generating the figures for this article. 

This work is supported by the Institutional Strategy of the University of T\"ubingen (Deutsche Forschungsgemeinschaft, ZUK 63).

\bibliographystyle{plain}

\begin{thebibliography}{10}

\bibitem{carla}
Carla Cederbaum.
\newblock Uniqueness of photon spheres in static vacuum asymptotically flat
  spacetimes.
\newblock In {\em Complex Analysis \& Dynamical Systems IV}, volume 667 of {\em
  Contemp. Math.}, pages 86--99. AMS, 2015.

\bibitem{carlagregelectro}
Carla Cederbaum and Gregory~J. Galloway.
\newblock Uniqueness of photon spheres in electro-vacuum spacetimes.
\newblock {\em Class. Quantum Grav.}, 33(7):075006, 2016.

\bibitem{carlagregpmt}
Carla Cederbaum and Gregory~J. Galloway.
\newblock Uniqueness of photon spheres via positive mass rigidity.
\newblock {\em Commun. Anal. Geom.}, 25(2):303--320, 2017.

\bibitem{dafermos}
Mihalis Dafermos and Igor Rodnianski.
\newblock Lectures on black holes and linear waves.
\newblock arXiv:0811.0354 [gr-qc], 2008.

\bibitem{dyatlov}
Semyon Dyatlov.
\newblock Asymptotics of linear waves and resonances with applications to black
  holes.
\newblock {\em Communications in Mathematical Physics}, 335(3):1445--1485, May
  2015.

\bibitem{perel1}
Grisha Perelman.
\newblock The entropy formula for the {R}icci flow and its geometric
  applications, 2002.

\bibitem{perel3}
Grisha Perelman.
\newblock Finite extinction time for the solutions to the {R}icci flow on
  certain three-manifolds, 2003.

\bibitem{perel2}
Grisha Perelman.
\newblock Ricci flow with surgery on three-manifolds, 2003.

\bibitem{thurston}
William Thurston.
\newblock The geometry and topology of three-manifolds.
\newblock Princeton lecture notes on geometric structures on 3-manifolds, 1980.



\bibitem{grenze}
Arne Grenzebach.
\newblock {\em The Shadow of Black Holes. An analytic description}.
\newblock Springer, Berlin, 2016.

\bibitem{gpl14}
Arne Grenzebach, Volker Perlick, and Claus L{\"a}mmerzahl.
\newblock Photon regions and shadows of {K}err--{N}ewman--{NUT} black holes
  with a cosmological constant.
\newblock {\em Phys. Rev.}, D89(12):124004, 2014.

\bibitem{gplaccel}
Arne {Grenzebach}, Volker {Perlick}, and Claus {L{\"a}mmerzahl}.
\newblock {Photon regions and shadows of accelerated black holes}.
\newblock {\em International Journal of Modern Physics D}, 24:1542024, June
  2015.

\bibitem{hatch3}
Allen Hatcher.
\newblock Notes on basic 3-manifold topology.
\newblock \url{http://pi.math.cornell.edu/~hatcher/3M/3Mdownloads.html}, 2007.
\newblock last accessed 06/18/2018.

\bibitem{oneill}
Barrett O'Neill.
\newblock {\em The Geometry of {K}err Black Holes}.
\newblock Dover Publications, Mineola, New York, 2014.

\bibitem{orlik}
Peter Orlik.
\newblock {\em Seifert manifolds}, volume 291 of {\em Lecture notes in
  mathematics}.
\newblock Springer, Berlin, Heidelberg, New York, 1972.

\bibitem{claudio}
Claudio Paganini, Blazej Ruba, and Marius~A. Oancea.
\newblock Characterization of null geodesics on {K}err spacetimes.
\newblock arXiv:1611.06927 [gr-qc], 2016.

\bibitem{Perlick2004}
Volker Perlick.
\newblock Gravitational lensing from a spacetime perspective.
\newblock {\em Living Reviews in Relativity}, 7(1):9, Sep 2004.

\bibitem{perlick}
Volker Perlick.
\newblock On totally umbilic submanifolds of semi-{R}iemannian manifolds.
\newblock {\em Nonlinear Analysis}, 63(5-7), 2005.
\newblock arXiv:gr-qc/0512066.

\bibitem{scott}
Peter Scott.
\newblock The geometries of 3-manifolds.
\newblock {\em Bulletin of the London Mathematical Society}, 15(5):401--487,
  1983.

\bibitem{shoom1}
Andrey~A. Shoom.
\newblock Metamorphoses of a photon sphere.
\newblock {\em Phys. Rev. D}, 96:084056, Oct 2017.

\bibitem{shoom2}
Andrey~A. Shoom, Cole Walsh, and Ivan Booth.
\newblock Geodesic motion around a distorted static black hole.
\newblock {\em Phys. Rev. D}, 93:064019, Mar 2016.

\bibitem{sommers}
Paul Sommers.
\newblock On {K}illing tensors and constants of motion.
\newblock {\em Journal of Mathematical Physics}, 14(6):787--790, 1973.

\bibitem{Tangherlini}
Frank~R. Tangherlini.
\newblock {Schwarzschild field in $n$ dimensions and the dimensionality of
  space problem}.
\newblock {\em Nuovo Cim.}, 27:636--651, 1963.

\bibitem{teo}
Edward Teo.
\newblock Spherical photon orbits around a {K}err black hole.
\newblock {\em General Relativity and Gravitation}, 35(11):1909--1926, 2003.

\bibitem{yaza}
Stoytcho Yazadjiev and Boian Lazov.
\newblock Uniqueness of the static {E}instein--{M}axwell spacetimes with a
  photon sphere.
\newblock {\em Class. Quantum Grav.}, 32(16):165021, 2015.

\bibitem{yaza2}
Stoytcho~S. Yazadjiev.
\newblock {Uniqueness of the static spacetimes with a photon sphere in
  Einstein-scalar field theory}.
\newblock {\em Phys. Rev. D}, 91(12):123013, 2015.

\bibitem{yaza3}
Stoytcho~S. Yazadjiev and Boian Lazov.
\newblock Classification of the static and asymptotically flat
  {E}instein-{M}axwell-dilaton spacetimes with a photon sphere.
\newblock {\em Phys. Rev. D}, 93(8):083002, 11, 2016.

\bibitem{yoshino}
Hirotaka Yoshino.
\newblock Uniqueness of static photon surfaces: Perturbative approach.
\newblock {\em Phys. Rev. D}, 95:044047, Feb 2017.

\end{thebibliography}
\end{document}